\newtheorem{thm}{\bf Theorem}[section]
\newtheorem{lem}[thm]{\bf Lemma}
\newtheorem{pro}[thm]{\bf Proposition}
\newtheorem{coro}[thm]{\bf Corollary}
\newtheorem{rmk}{\bf Remark}
\theoremstyle{definition}
\newtheorem{definition}[thm]{Definition}
\newcommand{\Ric}{{\text{Ric}}}
\newcommand{\RC}{{\text{RC}}}
\newcommand{\RCD}{{\text{RCD}}}
\newcommand{\CD}{{\text{CD}}}
\newcommand{\MCP}{{\text{MCP}}}
\newcommand{\Lip}{{\text{Lip}}}
\newcommand{\Alex}{{\text{\bf Alex}}}
\newcommand{\BG}{{\text{BG}}}
\newcommand{\Haus}{{\mathcal{H}}}
\newcommand{\gexp}{{\text{gexp}}}
\newcommand{\cut}{{\text{cut}}}
\newcommand{\Cbar}{{\overline{C}}}
\newcommand{\vol}{{\text{vol}}}
\newcommand{\ka}{{\kappa}}
\newcommand{\ska}{{s_{\kappa}}}
\newcommand{\loc}{\text{loc}}
\newcommand{\vecpq}{{\stackrel{\rightarrow}{pq}}}
\numberwithin{equation}{section}
\begin{document}

\title[Alexandrov Spaces with Large Volume Growth]{Alexandrov Spaces with Large Volume Growth}

\author{Michael Munn}
\thanks{Department of Mathematics, University of Missouri, 210 Math Sciences Building, Columbia, MO ~65211. Email: \texttt{munnm@missouri.edu}.\\}

\maketitle

\begin{abstract}
Let $(X,d)$ be an $n$-dimensional Alexandrov space whose Hausdorff measure $\Haus^n$ satisfies a condition giving the metric measure space $(X,d,\Haus^n)$ a notion of having nonnegative Ricci curvature. We examine the influence of large volume growth on these spaces and generalize some classical arguments from Riemannian geometry showing that when the volume growth is sufficiently large, then $(X,d,\Haus^n)$ has finite topological type. 
\end{abstract}

\section{Introduction}
\label{section-introduction}
In this paper, we study $n$-dimensional Alexandrov spaces $(X,d)$ whose Hausdorff measure $\Haus^n$ satisfies a condition giving the metric measure space $(X,d,\Haus^n)$ a notion of having nonnegative Ricci curvature. In particular, we examine the influence of large volume growth (as defined using the Hausdorff measure) on these spaces and generalize some classical results from Riemannian geometry. 

Alexandrov spaces arise as the Gromov-Hausdorff limits of $n$-dimensional, compact Riemannian manifolds with sectional curvature $\geq k$ and diameter $\leq D$.  By now the study of Alexandrov spaces has become an interesting subject on its own \cite{BGP, PerelmanII, BBI, Shiohama}. In addition, in light of Gromov's Precompactness Theorem  \cite{Gromov} and Perelman's Stability Theorem (see Theorem \ref{PerelmanStability}), understanding the topology of Alexandrov spaces is also important for studying the curvature and topology of Riemannian manifolds. Already many well-known results from Riemannian geometry have been generalized to the more broad class of Alexandrov spaces (e.g. \cite{HarveySearle, HuaI, HuaII, JiaoI, JiaoII, KohI, KohII, KuwaeShioyaI, KuwaeShioyaII} among others) and many of the necessary tools used for smooth Riemannian manifolds have been adapted to these general metric spaces as well. 

While Alexandrov spaces capture the notion of lower sectional bounds for metric spaces, there are also several notions which aim to generalize the notion lower Ricci curvature bounds to metric measure spaces. In this direction, Sturm \cite{SturmI, SturmII} and Lott-Villani \cite{LottVillani} independently introduced a curvature-dimension condition $\CD(K, n)$, with $n \in (1, \infty]$ and $K \in \mathbb{R}$, defining metric measure spaces with $\Ric \geq (n-1)K$ and dimension $\leq n$. Their definition requires the convexity of specific entropy functionals when measured along geodesics in the $L^2$-Wasserstein space (see also \cite{BacherSturm}). Related to their definitions, Ohta \cite{Ohta} introduced the measure contraction property $\MCP(K,n)$ which is closely related to the definitions of Sturm-Lott-Villani. In short, the condition $\MCP(K,n)$ is an infinitesimal version of the Bishop-Gromov volume comparison theorem which is a well-known consequence of $n$-dimensional Riemannian manifolds with Ricci curvature $\geq (n-1)K$. Very recently, Ambrosio-Gigli-Savare \cite{AGSII} added the additional assumption of infinitesimally Hilbertian to the $\CD$ condition of Sturm-Lott-Villani which also ensures linearity of the Laplacian on the metric measure space to define the Ricci curvature dimension condition, denoted $\RCD(K, n)$. 

In this paper, we are concerned with Alexandrov spaces. In this setting, Zhang-Zhou \cite{ZhangZhouII} have introduced a new definition of lower bounds of Ricci curvature, denoted $\RC$, which is based on the the behavior of the second variation formula of arc length known for Riemannian manifolds. Also, in the setting of Alexandrov spaces, is the Bishop-Gromov  condition $\BG(K,n)$ introduced by Kuwae-Shioya. This definition can be seen as a special case of Ohta's MCP condition applied to Dirac masses and the Hausdorff measure on $(X,d)$. 

Clearly, viewing $(X,d,\Haus^n)$ as a metric measure space, each of the previous definitions described above also make sense for Alexandrov spaces. In fact, by work of Burago-Gromov-Perelman \cite{BGP},  Alexandrov spaces are necessarily infinitesimally Hilbertian. So, for an $n$-dimensional Alexandrov space with Hausdorff measure $\Haus^n$ we have:

\vspace{-.2in} 

{\small
\hspace{-.1in} \[\RC \!\geq\! (n-1)K\!\! \implies\!\! \RCD((n-1)K,n)\!\! \iff  \!\!\CD((n-1)K,n)\!\! \implies\!\! \MCP((n-1)K,n)\!\! \iff \!\!\BG((n-1)K, n)\]
}
\vspace{-.2in} 

In the same way that classical results for Riemannian manifolds with lower sectional curvature bounds have been generalized to Alexandrov spaces, there has been a strong effort to generalize the well-known theorems of the comparison geometry of Ricci curvature to metric measure spaces satisfying some weak Ricci curvature bound condition described above (e.g. see \cite{KuwaeShioyaI, KuwaeShioyaII, GigliMosconi, ZhangZhouI} for various generalizations of the well-known Cheeger-Gromoll Splitting Theroem.) To a large extent, this paper contributes to that goal. We consider Alexandrov spaces whose measure satisfies the $\BG(0,n)$ condition (see Section \ref{subsection-BG} for complete definition) which is the weakest of all the notions described above. We prove 

\begin{thm}
\label{mainthm}
For an integer $n \geq 2$, let $(X,d)$ be an $n$-dimensional, complete, noncompact Alexandrov space with nonempty boundary and whose Hausdorff measure satisfies the $BG(0,n)$ condition. There exists an $\epsilon(n,\ka) >0$ such that if 
\[ \Haus^n(B(p,r)) \geq  (1-\epsilon)\omega_n r^n, \quad \text{ for all } r >0, \text{ and some }p \in X,\]
then $(X,d)$ has finite topological type. 
\end{thm}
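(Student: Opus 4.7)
My plan is to follow the classical Sha--Shen / Abresch--Gromoll strategy adapted to the $\BG(0,n)$ setting: fix $p \in X$, prove that the distance function $d_p = d(p,\cdot)$ has all its critical points inside some bounded ball $B(p,R_0)$, and then invoke the Alexandrov-space version of Grove--Shiohama critical-point theory to deduce that $X$ is homotopy equivalent to a compact set and therefore of finite topological type. More precisely, Perelman's isotopy / deformation retract lemma for distance functions on Alexandrov spaces tells us that if $d_p$ has no critical points in $B(p,R_2) \setminus B(p,R_1)$, then $B(p,R_2)$ deformation retracts onto $\overline{B(p,R_1)}$; applied with $R_1 = R_0$ and $R_2 \to \infty$ this yields a compact homotopy equivalent to all of $X$, and any compact Alexandrov space has the homotopy type of a finite CW complex.

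The first technical input I would establish is an Abresch--Gromoll type excess estimate in the $\BG(0,n)$ setting. Since $X$ is noncompact, a standard compactness argument on the space of directions $\Sigma_p$ produces a ray $\gamma : [0,\infty) \to X$ with $\gamma(0) = p$. For $T \gg 1$ and any $q \in X$, set $e_T(q) = d(p,q) + d(q,\gamma(T)) - T$. The classical proof treats $e_T$ as a function that is weakly subharmonic thanks to the Laplacian comparison $\Delta d_x \le (n-1)/d_x$, and then uses a barrier argument against a Euclidean bump function; under the large-volume-growth assumption combined with $\BG(0,n)$, the integrated version of this comparison (which is exactly what $\BG(0,n)$ encodes) is still strong enough to yield
\[
e_T(q) \le C_n\left(\frac{h^{n}}{s}\right)^{\frac{1}{n-1}},
\]
where $s = d(p,q)$ and $h$ measures the distance from $q$ to $\gamma$, provided $s$ is large enough relative to $h$. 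This requires the weak/DC calculus for Lipschitz functions on Alexandrov spaces developed by Perelman and exploited in the $\BG$ setting by Kuwae--Shioya.

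The second input is a lower bound on $e_T$ at a critical point. If $q$ is critical for $d_p$, then by definition every direction $v \in \Sigma_q$ admits a direction of a minimal segment from $q$ to $p$ making angle $\le \pi/2$ with $v$. Applying this to the direction of a minimal segment from $q$ to $\gamma(T)$, together with the Toponogov hinge comparison available on nonnegatively curved Alexandrov spaces, gives $d(p,\gamma(T))^2 \le d(p,q)^2 + d(q,\gamma(T))^2$; writing $T = d(p,\gamma(T))$ and expanding, one extracts a quantitative lower bound of the form $e_T(q) \ge c_n\, d(p,q)$ for $T \gg d(p,q)$. Comparing this with the Abresch--Gromoll upper bound from the previous paragraph, and using the hypothesis $\Haus^n(B(p,r)) \ge (1-\epsilon)\omega_n r^n$ to keep the constants under control via $\BG(0,n)$ comparison, one forces a contradiction whenever $d(p,q) > R_0(n,\epsilon)$. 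This confines critical points of $d_p$ to $B(p,R_0)$ and finishes the argument.

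The step I expect to be the main obstacle is the Abresch--Gromoll excess estimate itself, because in the smooth Riemannian proof it uses the pointwise inequality $\Delta d_x \le (n-1)/d_x$, whereas $\BG(0,n)$ provides only the corresponding integrated Bishop--Gromov statement. I expect to resolve this by combining the semiconcavity of distance functions on Alexandrov spaces (so that $d_p$ and $d_{\gamma(T)}$ are DC and admit a well-defined distributional Laplacian) with the weak maximum principle on balls endowed with the Hausdorff measure. The hypothesis $\partial X \neq \emptyset$ is intriguing; I would look to use it either to guarantee rays with well-behaved Busemann functions, or, more likely, to pass to the double $\tilde X = X \cup_{\partial X} X$, which is a boundaryless nonnegatively curved Alexandrov space on which the full analytic machinery (including the Abresch--Gromoll estimate under $\BG(0,n)$) is available; finite topological type of $\tilde X$ then descends to $X$ via the $\mathbb{Z}/2$ reflection symmetry.
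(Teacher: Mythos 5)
Your overall skeleton---confine the critical points of $d_p$ to a bounded ball via an upper bound on the excess from an Abresch--Gromoll estimate versus a lower bound from criticality plus Toponogov, then conclude by Perelman's critical point theory---is exactly the paper's strategy, and your plan for proving the excess estimate itself (distributional Laplacian comparison under $\BG(0,n)$ \`a la Kuwae--Shioya plus the weak maximum principle for $\Haus^n$-subharmonic functions) matches the Appendix. However, there are two genuine gaps. First, your setup uses a single fixed ray $\gamma$ from $p$ and the excess $e_T(q)=d(p,q)+d(q,\gamma(T))-T$, but for a critical point $q$ with $d(p,q)=R$ large there is no control whatsoever on $h=d(q,\gamma)$: it can be comparable to $R$, in which case the bound $C_n(h^n/s)^{1/(n-1)}$ is worse than the trivial bound $e\le 2h$ and the argument collapses. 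The entire point of the hypothesis $\Haus^n(B(p,r))\ge(1-\epsilon)\omega_n r^n$ is to produce, for \emph{each} candidate critical point $x$ at distance $R$ from $p$, a minimal geodesic $\overline{pb}$ with $b\notin B(p,2R)$ passing within $\epsilon R$ of $x$; this is the paper's Lemma \ref{PerelmanMaxVolume}, proved by the Perelman-type volume comparison (integrating $\Haus^n$ over $\Sigma_pX$ via Lemma \ref{integration lemma} and estimating the measure of directions whose geodesics stop before $2R$). You gesture at the volume hypothesis only to ``keep the constants under control,'' which is not where it enters; without this lemma your $h$ is uncontrolled and the proof does not close.

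Second, your lower bound at a critical point invokes ``the Toponogov hinge comparison available on nonnegatively curved Alexandrov spaces'' to get $d(p,\gamma(T))^2\le d(p,q)^2+d(q,\gamma(T))^2$ and hence $e_T(q)\ge c_n\,d(p,q)$. But the space is only assumed to lie in $\Alex^n[-\ka^2]$; the condition $\BG(0,n)$ is the Ricci-type hypothesis and does not give nonnegative Alexandrov curvature, so the hinge comparison must be run against the hyperbolic model. The hyperbolic law of cosines with angle $\le\pi/2$ yields only $e_{p,q}(x)\ge\frac{1}{\ka}\ln\bigl(\tfrac{2}{1-e^{-2\ka R}}\bigr)>\frac{1}{\ka}\ln 2$, a $\ka$-dependent \emph{constant} rather than a bound growing linearly in $d(p,q)$. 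This is precisely why $\epsilon$ in the theorem depends on $\ka$: one must choose $\epsilon(n,\ka)$ so that the excess upper bound $8\epsilon^{n/(n-1)}$ falls below $\frac{1}{\ka}\ln 2$. Your claimed linear lower bound would make the proof easier but is not available here. Finally, the ``nonempty boundary'' in the statement is at odds with the paper's standing assumption $\partial X=\emptyset$ in Sections \ref{subsection-alexandrov} and \ref{section-proofs} (it is evidently a typo for ``empty boundary''), so the doubling detour you propose is unnecessary and would anyway require re-verifying $\BG(0,n)$ and the critical point theory on the double.
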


When phrased in this way we assume that $(X,d)$ has some arbitrary lower bound on its Alexandrov curvature (see Section \ref{subsection-alexandrov} for complete definition). With this in mind, Theorem \ref{mainthm} provides an ideal metric geometry generalization of a classical type of theorem for Riemannian manifolds with a lower  Ricci curvature bound and whose sectional curvature $\geq -\ka^2$ (e.g. \cite{AbreschGromoll1990, doCarmoXia, Eschenburg, Ordwayetal, PerelmanDiameterStability, Shen, Xia} among others). We expect that each of these results would also admit similar generalization as well. 

The proof of our main Theorem relies on an application of the excess estimate for Alexandrov spaces which we prove in this paper. Originally, the excess estimate was  proven by Abresch-Gromoll \cite{AbreschGromoll1990} for Riemannian manifolds with only a lower  Ricci curvature bound (i.e. no bound on sectional curvature). Here, we use a generalized version of the excess estimate for Alexandrov spaces whose Hausdorff measure satisfies an infinitesimal Bishop-Gromov condition so that we say $(X,d, \Haus^n)$ has nonnegative Ricci curvature. Our method of proof follows the original proof of Abresch-Gromoll \cite{AbreschGromoll1990}. 

In fact, an even more general version of the excess estimate was recently generalized by Gigli-Mosconi \cite{GigliMosconi} to metric measure spaces satisfying a curvature dimension condition $\CD(K,n)$ and which are infinitesimally Hilbertian. Note that this version of Gigli-Mosconi holds for any infinitesimally Hilbertian metric measure space (not necessarily Alexandrov) and thus contains ours. We include a sketch of the proof of the excess estimate in the Appendix of this paper and highlight the main ideas of the argument there.

In Section \ref{section-preliminaries} we define Alexandrov spaces and mention some of their basic analytic features which will be useful in the sequel. We also define the Bishop-Gromov condition of Kuwae-Shioya which we mentioned briefly above. In Section \ref{section-proofs}, we prove Theorem \ref{mainthm}. Our proof relies on a typical application of the excess estimate which we verify holds for Alexandrov spaces with the $\BG(0,n)$ condition in the Appendix.
\section{Preliminaries}
\label{section-preliminaries}
\subsection{Alexandrov Spaces} 
\label{subsection-alexandrov}In this subsection, we recall some of the basic information of Alexandrov spaces (see also \cite{BBI, OtsuShioya, BGP, PerelmanII, PerelmanPetrunin}, in particular equivalent definitions in Chapter 4 of \cite{BBI}).

A finite-dimensional {\it Alexandrov space} is a complete, locally compact, connected length space which satisfies a lower curvature bound in an angle-comparison sense described below. Recall that a length space (or geodesic space) is a metric space $(X,d)$ such that any two points $p,q \in X$ can be joined by a rectifiable curve whose length is equal to $d(p,q)$. We call such a distance realizing curve a {\it minimal geodesic} and use $\overline{pq}$ to denote a (not necessarily unique) minimal geodesic joining $p$ and $q$. 

To describe the lower curvature bound, fix a number $k \in \mathbb{R}$. For any triple of points $p,q,r \in X$ (usually thought of as a triangle $\Delta pqr$), let $\tilde{\Delta}pqr$ denote the triangle $\Delta \tilde{p}\tilde{q}\tilde{r}$ of points in the  $k$-plane such that $|\tilde{p}\tilde{q}| = d(p,q), |\tilde{q}\tilde{r}| = d(q,r)$ and $|\tilde{p}\tilde{r}| = d(p,r)$. 
\begin{definition}
$(X,d)$ has curvature $\geq k$ in an open set $U \subset X$ if, for each quadruple of points $(p; a,b,c)$ in $U$,  
\[\widetilde{\angle}_{k}apb + \widetilde{\angle}_{k}bpc +\widetilde{\angle}_{k}cpa \leq 2\pi,\]
where $\widetilde{\angle}_{k}apb$ is the comparison angle at $\tilde{p}$ of a triangle $\tilde{\Delta}apc$ in the $k$-plane (define $\widetilde{\angle}_{k}bpc$, $\widetilde{\angle}_{k}cpa $ similarly).
\end{definition}
In \cite{BGP} (c.f. \cite{BBI}), the authors show that in fact this local condition implies the same is true globally for any quadruple of points in $X$. 

We will denote the class of $n$-dimensional Alexandrov spaces of curvature $\geq k$ by $\Alex^n[k]$. Certain care must be taken when assuming $k >0$ to ensure these quantities are well-defined; however, in this paper we focus on the case when $k$ is some arbitrary negative lower bound $k = -\ka^2 > -\infty$. Also, we assume that $X$ has no boundary. Note that by property of being Alexandrov, it follows that $(X,d)$ is a non-branching metric space.  Thus, using the terminology of Ambrosio-Gigli-Savare \cite{AGSII}, and Alexandrov space is infinitesimally Hilbertian thus  $\RCD(K, \infty) \iff \CD(K, \infty)$ as we saw above.

For $p \in X$, denote by $\Sigma_pX$ the {\em space of directions at $p$}. Note that $\Sigma_pX \in \Alex^{n-1}[1]$ and is compact. The metric cone over $\Sigma_pX$ equipped with the cone metric is called the {\em tangent cone at $p$} and is denoted $C_pX$; furthermore, $C_pX \in \Alex^n[0]$. Note, for smooth Riemannian manifold $(M,g)$ viewed as a metric space $(M, d_g)$ whose distance metric $d_g$ is induced from $g$, that $C_pM$ and $\Sigma_pM$ are precisely the tangent space $T_pM$ and the unit tangent sphere $S_pM$ (resp.) of $M$ at $p$.

\begin{definition}{\em \bf (Singularities of Alexandrov Spaces.)}
A point $p$ is called a {\it singular point of X} if $\Sigma_pX$ is not isometric to the unit sphere $\mathbb{S}^{n-1}$. For some $\delta>0$, a point is called  {\it $\delta$-singular} if $\Haus^{n-1}(\Sigma_pX) \leq \vol(\mathbb{S}^{n-1}) - \delta$. Let $S_X$ and $S_{\delta}$ denote the set of singular points of $X$ and the set of $\delta$-singular points of $X$ (respectively). Note $S_X = \bigcup_{\delta >0} S_{\delta}$.
We say that a point $p$ is {\it topologically singular} if  $\Sigma_pX$ is not homeomorphic to a sphere. 
\end{definition} 

Every finite-dimensional Alexandrov space can be stratified into topological manifolds. In particular, if an Alexandrov space has no boundary, then the codimension of the set of topologically singular points is a least three. 

We state now some additional properties of the local structure of $X$ that will be useful in the sequel. Most notably, we have
\begin{thm}
\label{PerelmanStability} {\em (Perelman's Stability Theorem, c.f. \cite{Kapovitch})}
Let $X \in \Alex^n[k]$ be compact. Then there exists some $\epsilon := \epsilon(X)>0$ such that any $Y \in \Alex^n[k]$ with $d_{GH}(X,Y) < \epsilon$ is homeomorphic to $X$. Here $d_{GH}$ denotes the Gromov-Hausdorff distance on metric spaces.
\end{thm}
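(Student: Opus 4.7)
The plan is to follow Perelman's original strategy, with the refinements given by Kapovitch, namely to build — for every $Y \in \Alex^n[k]$ sufficiently close to $X$ in Gromov--Hausdorff distance — an explicit homeomorphism $h \colon X \to Y$ as a perturbation of a Gromov--Hausdorff approximation. The construction is local first, using \emph{distance maps} of the form $f(\cdot)=(d(\cdot,a_1),\ldots,d(\cdot,a_m))$ adapted to the local strainer structure of $X$, and then globalized by a gluing argument. Throughout, compactness of $X$ will let us reduce to a uniform finite amount of combinatorial data.

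First I would record, for every $p \in X$, a neighborhood $U_p$ together with an \emph{admissible} distance map $f_p \colon U_p \to \mathbb{R}^{m(p)}$ built from a $(m(p),\delta)$-strainer at $p$, where $m(p)$ is the local strainer number. At a manifold-type (regular) point this gives a local homeomorphism onto an open set of $\mathbb{R}^n$; at a singular point one uses Perelman's local conical structure theorem, which asserts that $p$ has a neighborhood homeomorphic to the open cone on $\Sigma_pX$, and that this cone structure is detected by a suitable admissible map. By compactness of $X$ one extracts a finite cover $\{U_{p_1},\ldots,U_{p_N}\}$ by such neighborhoods, together with a finite list of reference points $\{a_{i,j}\}$ defining the strainers.

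Given $Y$ with $d_{GH}(X,Y)<\epsilon$, fix an $\epsilon$-Gromov--Hausdorff approximation $\varphi \colon X \to Y$ and let $\tilde a_{i,j}=\varphi(a_{i,j}) \in Y$. Define $\tilde f_{p_i} \colon Y \to \mathbb{R}^{m(p_i)}$ by the same formula, but using the points $\tilde a_{i,j}$ and the metric on $Y$. Because $Y \in \Alex^n[k]$ and angles are semicontinuous under Gromov--Hausdorff convergence, the strainer condition transfers to $\tilde f_{p_i}$ on a neighborhood $\tilde U_{p_i}$ of $\varphi(U_{p_i})$ provided $\epsilon$ is small enough. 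Then Perelman's fibration theorem for admissible maps, together with the non-critical point theory for distance functions on Alexandrov spaces, yields local homeomorphisms $h_{p_i} \colon U_{p_i} \to \tilde U_{p_i}$ (at regular points this is a diffeomorphism-like statement; at singular points it relies on the local cone structure). Crucially, each $h_{p_i}$ is $C(n,k)\epsilon$-close to $\varphi$ in the GH sense.

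The final step is to glue the $h_{p_i}$ to a global homeomorphism $h \colon X \to Y$. The $h_{p_i}$ do not in general agree on overlaps, but they are all close to $\varphi$ and they all fit into the same local fibration picture; one can therefore combine them inductively using a Siebenmann-type handle-gluing argument, at each stage modifying the new local map on a collar so that it matches the piece already constructed. The same argument in reverse produces a continuous inverse.

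The main obstacle, and the reason this theorem is hard, is step three: showing that an admissible distance map retains its local topological type under a small Gromov--Hausdorff perturbation \emph{even at singular points}. This rests on Perelman's deep topological Morse theory for distance functions on Alexandrov spaces (general position of regular values, existence of gradient-like flows, and the conic local structure at singular points), whose stability under GH-convergence is precisely the content of the theorem. Everything else — choosing the cover, transferring strainers, gluing — is fairly routine once that stability is established.
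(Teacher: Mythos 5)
First, a point of order: the paper does not prove this statement at all --- it is quoted from the literature (Perelman's preprint, as exposited by Kapovitch), so there is no in-paper argument to compare yours against. Judged on its own terms, your sketch correctly identifies the standard skeleton of Perelman's proof: admissible distance maps adapted to strainers, transfer of the strainer data to $Y$ via a Gromov--Hausdorff approximation, the local fibration theorem, and a gluing step. But it has two genuine gaps, and they are exactly where the difficulty of the theorem lives. The first is that the argument must be run as a reverse induction on dimension: to show that a singular point $p$ retains its local topological type under a small GH perturbation, one needs to know that the space of directions $\Sigma_pX \in \Alex^{n-1}[1]$ and its counterpart in $Y$ are homeomorphic, i.e.\ one needs the stability theorem (in a relative, parametrized form) one dimension down, applied to the spaces of directions. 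Your sketch treats the local conical structure at singular points as something that ``transfers'' via the fibration theorem, and your closing paragraph concedes that the stability of that local structure ``is precisely the content of the theorem'' --- which, as written, makes the key step circular rather than proved.

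The second gap is the gluing. The local homeomorphisms $h_{p_i}$ are only close to $\varphi$, not compatible on overlaps, and assembling them into a global homeomorphism is not a routine collar or partition-of-unity argument: it requires Siebenmann's deformation-of-homeomorphisms theory in a strong relative form (the ``strong gluing theorem'' in Kapovitch's exposition), applied inductively over a carefully chosen cover; moreover, the statement being glued must itself be the stronger parametrized version of stability (with control near a compact subset and with closeness to a prescribed approximation) so that both the gluing induction and the dimension induction close. A complete proof therefore has to formulate and prove that stronger relative statement, not just the absolute one you set out to prove. None of this makes your outline wrong --- it is the correct road map --- but as a proof it defers precisely the two steps that make the theorem deep.
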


It follows from this that every point $p$ of a finite-dimensional Alexandrov space has a small metric neighborhood which is pointed homeomorphic to the tangent cone $C_pX$. In \cite{OtsuShioya}, Otsu-Shioya prove a number of important structural results describing the Riemannian structure of Alexandrov spaces. In particular, they show that for $(X,d)$ as above there exists a $\delta_n>0$ such that there is a canonical $C^{\infty}$ structure on $X \setminus S_{\delta_n}$ which is compatible with the the $DC$-structure constructed by Perelman in \cite{PerelmanDC}. 

In addition, they construct a unique $C^0$-Riemannian metric $g$ on $X \setminus S_X$ such that the distance function induced from $g$ coincides with the original distance metric $d$ on $X$. This metric $g$ is of locally bounded variation and the volume measure on $X \setminus S_{\delta_n}$ induced from $g$ is 
\[d\vol_g(x) = \sqrt{|g|}~d\Haus^n(x).\]

It should be noted that the non-singular set $X \setminus S_X$ is not a manifold as it is possible that the set $S_X$ of singular points can be quite `bad', (e.g. Example 2 in \cite{OtsuShioya} where $S_X$ is dense in $X$). However, if the size of the singular set is controlled in some way (e.g. if $\overline{S_X} \neq X$) then many of the classical techniques of smooth Riemannian manifolds admit direct generalizations to Alexandrov spaces. Note that Petrunin has shown 
 \begin{lem}\em{(c.f. \cite{Petrunin-ParallelTransportation}.)} Let $p, q \in X$ and let $\overline{pq}$ be a minimal geodesic joining them. Then for all points $x \in \overline{pq}\setminus \{p,q\}$ the space of directions $\Sigma_xX$ are isometric.
\end{lem}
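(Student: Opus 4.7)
The plan is to construct, for any two interior points $x,y\in\overline{pq}$, an explicit isometry $\Sigma_{x}X\to\Sigma_{y}X$. Parametrize the geodesic as $\gamma:[0,L]\to X$ with $\gamma(0)=p,\gamma(L)=q$ and set $x_t:=\gamma(t)$. Since "is isometric to" is an equivalence relation on the family $\{\Sigma_{x_t}X\}_{t\in(0,L)}$ and the interval $(0,L)$ is connected, it suffices to treat the local case where $|s-t|$ is small; a standard chaining argument then delivers the global conclusion. At each interior $x_t$, the two directions along $\gamma$ form a pair of antipodal points $\xi_t^\pm\in\Sigma_{x_t}X$ (they are at angular distance $\pi$ because $\gamma$ is minimizing), and this antipodal pair will serve as the "axis" along which the transport is built.

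For the local construction I would use Petrunin-style parallel transport along $\gamma$. Observe that in a neighborhood of the interior of $\gamma$, the distance function $f:=d(q,\cdot)$ is $1$-Lipschitz and regular in the Perelman--Petrunin sense, with synthetic gradient everywhere unit and pointing along $\gamma$ away from $q$; its gradient semi-flow $\Phi_\tau$ is Lipschitz on $X$ and sends $x_s$ to $x_{s+\tau}$ along $\gamma$. The candidate isometry $\Sigma_{x_s}X\to\Sigma_{x_t}X$ is the infinitesimal action of $\Phi_{t-s}$, which I would realize concretely as a limit of broken-geodesic transports: given a fine subdivision $s=s_0<\dots<s_N=t$ and a direction $v\in\Sigma_{x_s}X$, run a length-$\epsilon$ geodesic $\sigma_0$ from $x_s$ in direction $v$, let $v_1\in\Sigma_{x_{s_1}}X$ be the direction from $x_{s_1}$ toward $\sigma_0(\epsilon)$, and iterate to produce $v_N\in\Sigma_{x_t}X$; passing to the limit $\epsilon\to 0$ and refining the partition should yield the desired map. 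Applied simultaneously to two directions $v,w$, the $k$-model triangles $\widetilde{\Delta}(x_{s_i},\sigma_i(\epsilon),x_{s_{i+1}})$ coming from Toponogov comparison control the deviation of angles at each step, while the non-branching property of $X$ together with the fact that $\gamma$ continues as a minimal geodesic through every $x_{s_{i+1}}$ prevents any angle loss in the limit, yielding the isometry on a dense set of pairs and hence on all of $\Sigma_{x_s}X$ by completeness of the angle metric.

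The main obstacle is showing that this broken-geodesic construction genuinely converges and is independent of the choices of subdivision and auxiliary parameter $\epsilon$, so that it defines a well-posed map of tangent cones rather than merely a Lipschitz correspondence. Unlike the smooth Riemannian setting there is no parallel-transport ODE to invoke, so the argument must be fully synthetic: the key inputs are that each tangent cone $C_{x_t}X$ is itself a nonnegatively curved Alexandrov space (where comparison is sharp and a splitting $C_{x_t}X\cong\mathbb{R}\cdot\xi_t^+\times C'_{x_t}$ may be extracted from the antipodal pair), combined with second-variation-type estimates along short geodesic quadrilaterals straddling $\gamma$ that propagate the isometry property across each refinement. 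Once the infinitesimal action of $\Phi_{t-s}$ is shown to be an isometry on $C_{x_s}X\to C_{x_t}X$, restriction to the unit spheres produces the claimed isometry $\Sigma_{x_s}X\to\Sigma_{x_t}X$ and, by the chaining reduction, for every pair of interior points of $\overline{pq}$.
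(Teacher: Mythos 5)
The paper does not actually prove this lemma; it is quoted as a known result of Petrunin, whose article \cite{Petrunin-ParallelTransportation} is cited precisely because the construction of parallel transport along a geodesic in an Alexandrov space is a substantial theorem in its own right. Your roadmap is essentially Petrunin's: chain the problem to nearby interior points, build a broken-geodesic transport along $\gamma$, and pass to a limit. The observation that the antipodal pair $\xi_t^{\pm}\in\Sigma_{x_t}X$ forces $\Sigma_{x_t}X$ to be a spherical suspension (equivalently, that $C_{x_t}X$ splits off a line) is correct and is a genuine simplification, since it reduces the problem to transporting the equatorial set.

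The gap is at the decisive step. Toponogov comparison in a space with curvature bounded \emph{below} controls distances between endpoints of short geodesics only from one side: it prevents $d(\sigma_v(\epsilon),\sigma_w(\epsilon))$ from being larger than the model value but allows it to be much smaller. Consequently your iterated construction can only be shown, by comparison alone, to produce a $1$-Lipschitz (non-expanding) map $\Sigma_{x_s}X\to\Sigma_{x_t}X$ in the limit; nothing in the argument rules out strict contraction of angles, and the appeal to non-branching does not supply the missing lower bound --- non-branching concerns uniqueness of geodesic extensions, not rigidity of angles under transport. (Likewise, the differential of the gradient flow $\Phi_\tau$ of $d(q,\cdot)$ is in general only non-expanding, so "the infinitesimal action of $\Phi_{t-s}$" cannot be assumed isometric.) Upgrading the non-expanding map to an isometry is exactly the content of Petrunin's theorem, and the known proofs require an additional input you have not provided: either his second-variation inequality along quasigeodesics, or the construction of non-expanding \emph{surjections} in both directions between the compact spaces $\Sigma_{x_s}X$ and $\Sigma_{x_t}X$, after which the classical fact that a distance non-increasing surjection of a compact metric space onto itself is an isometry closes the argument. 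As written, the proposal establishes at best that each $\Sigma_{x_t}X$ is a $1$-Lipschitz image of each other one, which does not imply they are isometric.
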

Which, in particular, guarantees that any minimal geodesic joining two non-singular points $p, q \notin S_X$ is also non-singular for any $x \in \overline{pq}$. 

\subsubsection{Gradient Curves of Alexandrov Spaces {\em (c.f. \cite{PerelmanPetrunin}) }} 
\label{subsection-gradientcurves}
In the smooth setting of Riemannian manifolds, the Inverse Function Theorem guarantees that at any point $p \in M$, the exponential map $\exp_p : T_pM \to M$ is a diffeomorphism in some neighborhood of the origin and, furthermore, if the sectional curvature of $M$ is $\geq k$ then $\exp_p$ is a contraction map. 

For Alexandrov spaces, however, the situation is a bit more subtle. The domain of the exponential map on $C_pX$ does not in general contain an open neighborhood of the origin and thus may not be defined. Instead, one defines the gradient exponential map which are constructed as gradient curves of distance functions in $X$. This construction is described in detail in Section 3 of \cite{PerelmanPetrunin} where, for any $p\in X$ and $\xi\in \Sigma_pX$, the authors construct a unique complete dist$_p$-gradient curve $\gamma :[0,a) \to X$ with initial velocity vector $\gamma^{+}(0)=\xi$.

A locally Lipshitz function $f: U \subset X \to \mathbb{R}$ is called {\em semiconcave} if for every $p \in X$ there exits $\lambda \in \mathbb{R}$ and a neighborhood $U_x \ni x $ such that $f|_{U_x}$ is $\lambda$-concave; i.e. for any unit speed geodesic $\gamma \in U_x$, $f \circ \gamma(t) - \frac{\lambda}{2}t^2$ is concave.

A semiconcave function is differentiable at almost every point in $X$ and at points $q\in X$ where $f$ is differentiable, we say a vector $v \in C_qX$ is the {\em gradient of $f$ at $q$} provided $df(v) = |v|^2$ and the function $df(u)/|u|^2$ on $C_pX$ achieves its maximum for $u = v$. We use the suggestive notation $\nabla d_p(q)$ to denote $v$ and say a point $q$ is a cut point of $f$ if $\nabla f(q) = 0$.

Taking now, $f(\cdot) = d(p, \cdot)$, which is semiconcave on $X \setminus \{p\}$, we define
\begin{definition}
On a neighborhood $U \subset X$ where $d(p,\cdot)$ has no cut points, a locally Lipschitz curve $\alpha: (a,b) \to U$ is called a $d_p$-gradient curve if 
\[d_p \circ \alpha(t) = t \quad \text{ and  } \quad \alpha^+(t) = \frac{\nabla d_p(\alpha(t))}{|\nabla d_p(\alpha(t))|^2}, \text{  for all }t \in (a,b).\]
\end{definition}
\noindent Here $\alpha^+(t)$ denotes the one-sided derivative 
\[\alpha^+(t) = \lim_{\epsilon \to 0^+}\frac{\log_{\gamma(t)} \gamma(t + \epsilon)}{\epsilon}, \]
where the $\log$ function is understood as follows:  for two points $p,q \in X$, let $\stackrel{\rightarrow}{pq} ~\in \Sigma_pX$ denote a direction of a minimizing geodesic in $X$ from $p$ to $q$. Define \[\log_p q := d(p,q) \cdot\vecpq \in C_pX.\] 

In \cite{PerelmanPetrunin}, using a limiting argument of directions in $\Sigma_pX$ and monotonicity estimates for $f$-gradient curves, Perelman-Petrunin show
\begin{pro} {\em (see \cite{PerelmanPetrunin})} Let $p \in X$ and $U \subset X$ a neighborhood without cut points of $d_p$. For any $q \in U$, there is a unique complete $d_p$-gradient curve starting at $q$.
\end{pro}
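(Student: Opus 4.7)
The plan is to exploit the strong structure that the hypothesis "no cut points of $d_p$ in $U$" provides: at every $q \in U$, there is a unique minimizing geodesic $\overline{pq}$, so the space of directions $\Sigma_qX$ contains a distinguished direction $\vecpq$ and hence $\nabla d_p(q)$ is uniquely defined. First I would establish that on $U$ the gradient equals this unit direction: for any $v \in C_qX$ the first variation formula gives $d(d_p)_q(v) = -\langle \vecpq, v\rangle$, so the maximum of $d(d_p)_q(u)/|u|$ is achieved uniquely at the opposite ray, yielding $\nabla d_p(q) = -\log_q p / |\log_q p|$, a unit vector. In particular $|\nabla d_p(q)|^2 = 1$ throughout $U$, so the defining equation for a $d_p$-gradient curve reduces to $\alpha^+(t) = \nabla d_p(\alpha(t))$, i.e. unit-speed motion in the outward radial direction.

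For existence, I would construct $\alpha$ as the extension of the minimizing geodesic from $p$ through $q$, parameterized by $t = d(p,\alpha(t))$. As long as $\alpha(t)$ remains in $U$, the absence of cut points guarantees that this extension is unambiguous and the geodesic remains minimizing from $p$; hence $d_p \circ \alpha(t) = t$, and by the computation above $\alpha^+(t)$ coincides with $\nabla d_p(\alpha(t))$. Completeness means extending as long as $\alpha(t) \in U$, which is exactly the admissible range stipulated by the statement.

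For uniqueness, suppose $\alpha$ and $\beta$ are two $d_p$-gradient curves starting at $q$. At each time $t$, both one-sided derivatives must lie in $C_{\alpha(t)}X$ (resp.\ $C_{\beta(t)}X$) and equal the gradient at that point, which we have just shown is the unique outward unit radial direction. Since $\alpha^+(t_0) = \nabla d_p(q)$ is uniquely determined at the common starting point $q$, and since the constraint $d_p\circ\alpha(t) = t$ pins down the parameterization, the standard monotonicity/semiconcavity argument for $\lambda$-concave functions of Perelman-Petrunin applies: compute $\frac{d}{dt}d(\alpha(t),\beta(t))^2$, bound it via the $\lambda$-concavity of $d_p$ restricted to geodesics between $\alpha(t)$ and $\beta(t)$ (which live in $U$), and invoke a Gronwall-type estimate with $d(\alpha(t_0),\beta(t_0)) = 0$ to force $\alpha \equiv \beta$.

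The main obstacle is not the geometric picture, which is clean, but the verification that the one-sided derivative and the distance-realizing extension really coincide in the non-smooth setting: one has to use semiconcavity of $d_p$ together with the uniqueness of minimizers to rule out branching and to ensure $\alpha^+(t)$ exists and depends continuously enough on $t$ to apply the monotonicity machinery. This is precisely where the no-cut-point hypothesis is essential, since it converts the general (and more delicate) gradient flow construction of Perelman-Petrunin for arbitrary semiconcave functions into the concrete geodesic extension above.
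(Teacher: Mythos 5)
The paper does not actually prove this proposition; it is quoted from Perelman--Petrunin, who construct the gradient curve by a limiting argument (limits of broken ``gradient-like'' polygons of directions in the spaces of directions) combined with monotonicity estimates for $f$-gradient curves. Measured against that, your uniqueness half is essentially right: the first-variation inequality plus semiconcavity of $d_p$ along geodesics between $\alpha(t)$ and $\beta(t)$ gives $\frac{d}{dt}d(\alpha(t),\beta(t))^2 \leq 2\lambda\, d(\alpha(t),\beta(t))^2$, and Gronwall forces $\alpha\equiv\beta$; this is exactly the Perelman--Petrunin monotonicity estimate. The existence half, however, has a genuine gap.

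Your reduction to ``$|\nabla d_p|\equiv 1$ on $U$, so the gradient curve is the unit-speed geodesic extension of $\overline{pq}$'' fails in Alexandrov spaces for three reasons. First, in this paper a \emph{cut point} of $d_p$ is by definition a point where $\nabla d_p$ vanishes, so the hypothesis ``no cut points in $U$'' only guarantees a nonvanishing gradient; it does not give uniqueness of the minimizer $\overline{pq}$, and your first-variation computation with a single distinguished direction is not available. Second, even where the minimizer is unique, the space of directions $\Sigma_qX$ need not contain a direction making angle $\pi$ with the incoming direction of $\overline{pq}$ (e.g.\ at a cone-type singular point $\operatorname{diam}\Sigma_qX<\pi$), so $\sup_{\xi}\bigl(-\cos\angle(\xi,\cdot)\bigr)<1$ and $|\nabla d_p(q)|<1$. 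This is precisely why the defining equation is $\alpha^+(t)=\nabla d_p(\alpha(t))/|\nabla d_p(\alpha(t))|^2$ rather than $\alpha^+(t)=\nabla d_p(\alpha(t))$, and why the paper must introduce the reparameterization $\rho$ with $d\rho/\rho = dt/t\cdot|\nabla d_p|^{-2}$; if the norm were identically $1$ your claim would make $\rho(t)=t$ and the gradient exponential would reduce to the ordinary exponential, which is exactly the map that need not exist. Third, geodesics in Alexandrov spaces are in general not extendable beyond their endpoints, and gradient curves are not geodesics once they leave the region where the extension exists; so ``extend $\overline{pq}$ through $q$'' does not produce a complete curve. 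Existence therefore cannot be obtained from geodesic extension and requires the limiting construction of Perelman--Petrunin: approximate by broken curves whose segments head in directions nearly maximizing $d(d_p)$, use the semiconcavity-based monotonicity estimate to show these converge, and verify that the limit satisfies $d_p\circ\alpha(t)=t$ and the one-sided derivative condition.
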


\noindent For a $d_p$-gradient curve $\gamma : (0,a) \to X$, which has arbitrary lower curvature bound $\geq -k^2$, define $\rho(t)$ so that
\[d\rho/\rho = dt/t \cdot |\nabla d_p(\gamma(t))|^{-2}, \quad \rho/t \to 1 \text{ as } t \to 0.\]

\begin{definition}Let $X$ be an Alexandrov space, $p \in X$. Given $v \in C_pX$, construct a complete $d_p$-gradient curve $\gamma$ with $\gamma(0) = p$ and $\gamma^{+}(0) = v/|v|$. Define the {\em gradient exponential map} $\gexp_p: C_pX \to X$ by
\[\gexp_p(v) = \gamma \circ \rho^{-1}(|v|).\]
\end{definition}
Note that the gradient-exponential map is non-expanding on $C_pX$. Gvien a direction $v \in \Sigma_pX$, we call the curve $\gamma : [0, \infty) \to X$ given by $\gamma(t) = \gexp_p(tv) $ the {\em radial curve} starting at $p$ in the direction $v$.

\subsubsection{Critical Point Theory for Alexandrov spaces} 
\label{subsection-criticalpointtheory}Essential to our arguments is Perelman's development of Morse theory for Alexandrov spaces \cite{PerelmanII, PerelmanMorse}. 

Let $(X,d) \in \Alex^n[k]$ and fix a point $p \in X$. A point $q \in X$ is called a {\it critical point of $d(p, \cdot)$} if, for any $x \neq q$, the comparison angle at $q$ satisfies $\widetilde{\angle}_{k}xqp \leq \frac{\pi}{2}$. If a point is not critical, then it is called a regular point. This metric notion of critical points of the distance function was first introduced by Grove-Shiohama and used to proved the Sphere Theorem \cite{GroveShiohama}. One of the main results of \cite{PerelmanII} is that the Isotopy Lemma of Grove-Shiohama generalizes to Alexandrov spaces as well.

In fact, we have Perelman's Fibration Theorem
\begin{thm} {\em (c.f. \cite{PerelmanII} and Proposition 2.2 of \cite{Perelman-CurvBddBelow}).}
Let $X$ be an $n$-dimensional Alexandrov space and $f: U \to \mathbb{R}^k$ an admissible function on some domain $U \subset X$. If $f$ has no critical point and is proper in $U$, then its restriction to $U$ is a locally trivial fiber bundle.  
\end{thm}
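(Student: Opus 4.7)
The plan is to adapt the smooth-manifold proof of Ehresmann's fibration theorem to the singular setting, using the gradient-exponential machinery of Perelman-Petrunin recalled in Section \ref{subsection-gradientcurves} in place of ordinary ODE integration. Concretely, for $f = (f_1,\ldots,f_k)$ admissible with each $f_i$ built from a distance function $d(p_i,\cdot)$, the hypothesis that $q \in U$ carries no critical point produces, by definition, a single direction $\xi_q \in \Sigma_q X$ and a $\delta > 0$ such that $\widetilde{\angle}_k p_i q x > \pi/2 + \delta$ for every $i$ and every $x$ in a small conical neighborhood of $\xi_q$. Upper semicontinuity of comparison angles on Alexandrov spaces, together with compactness of the space of directions, promotes this pointwise statement to a uniformly transverse direction field on an open neighborhood $V_q \subset U$.

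Next I would integrate this direction field into a local trivialization. Using the complete $d_{p_i}$-gradient curves and the gradient exponential map $\gexp$ recalled above, one constructs $k$ locally Lipschitz flows $\Phi^1_t,\ldots,\Phi^k_t$ on $V_q$ along which $f_i$ increases at a controlled positive rate while the other $f_j$ vary in a bounded, quantifiable fashion. Combining them by iteration (and solving for the parameters needed to sweep out a prescribed displacement in $\mathbb{R}^k$) produces, after shrinking $V_q$, a continuous map
\[
\Psi : (f^{-1}(f(q)) \cap W_q) \times B \longrightarrow U,
\qquad
f \circ \Psi(x,s) = f(x) + s,
\]
where $B \subset \mathbb{R}^k$ is a small ball centered at $0$. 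The non-expansion property of $\gexp$ together with the uniform transversality gap $\delta$ give injectivity and openness of $\Psi$ on a sufficiently small domain, so $\Psi$ is a local trivialization of $f$ over $B$.

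To globalize, I would invoke properness. For each $y_0 \in f(U)$ the fiber $f^{-1}(y_0)$ is compact, so finitely many neighborhoods $V_{q_1},\ldots,V_{q_N}$ of the previous type cover it; a standard Ehresmann-style gluing argument, in which one interpolates between overlapping local flows using a partition of unity on the compact fiber, then assembles the local trivializations into a single trivialization of $f$ over an open neighborhood of $y_0$ in $\mathbb{R}^k$. Performing this construction at every $y_0 \in f(U)$ yields the desired locally trivial fiber bundle structure on $f|_U$.

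The main obstacle, and the technical heart of Perelman's original argument, is verifying that the local map $\Psi$ is genuinely a homeomorphism onto an open set rather than merely a continuous surjection. In the smooth setting this is immediate from the inverse function theorem, but on an Alexandrov space the singular set can be dense (see Example 2 in \cite{OtsuShioya}) and the direction field $\xi_q$ is defined only through the abstract structure of $\Sigma_q X$. Controlling how the gradient curves produced by $\gexp$ depend on their initial point, and showing that the $k$-fold iterated flow produces a local homeomorphism rather than just a continuous map, requires the fine monotonicity estimates for $d_p$-gradient curves of Perelman-Petrunin; once these are in hand, everything else reduces to careful bookkeeping with the uniform transversality gap $\delta$.
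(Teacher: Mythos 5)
First, a point of reference: the paper does not prove this statement at all. It is quoted as a black box from Perelman (\cite{PerelmanII}, and Proposition 2.2 of \cite{Perelman-CurvBddBelow}), so there is no in-paper argument to compare yours against; I can only judge the sketch on its own merits. The overall shape is reasonable (regularity yields a uniform transversality gap $\delta$, gradient-type flows move points transversally to the level sets, properness localizes the problem over the base), but there are two genuine gaps, one of which is fatal as written.

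The fatal one is the globalization step. In the smooth Ehresmann argument the partition of unity is applied to \emph{horizontal lifts of coordinate vector fields}, and the gluing works only because the set of such lifts over a given point is convex, so a convex combination of local lifts is again a lift. In the Alexandrov setting there is no linear structure to exploit: your local data are flows and homeomorphisms $\Psi$ built from gradient curves, and one cannot average homeomorphisms of a metric space with a partition of unity. Gluing local product structures in the topological category is precisely the hard part; Perelman's actual proof does not glue in this way but proceeds by a reverse induction on $k$ intertwined with an induction on dimension and with the Stability Theorem, establishing that regular fibers are MCS-spaces and then invoking Siebenmann's deformation theory of stratified spaces to pass from local to global triviality. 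The second gap is in the local step itself: you assert that the Perelman--Petrunin monotonicity estimates for $d_p$-gradient curves suffice to make $\Psi$ a local homeomorphism, but the gradient flow of one $d_{p_i}$ does not preserve the level sets of the other components (so the ``iteration'' does not obviously close up into a product structure), and a priori the fibers $f^{-1}(y)$ are merely compact metric spaces --- even knowing that nearby fibers are homeomorphic to one another, which local triviality presupposes, already requires the Stability Theorem or an equivalent induction on dimension. (A smaller inaccuracy: admissible maps have components of the form $\sum_j \phi_{ij}(d(A_{ij},\cdot))$ for compact sets $A_{ij}$, not single distance functions from points, and regularity is a condition on the whole collection of directions, not a single $\xi_q$.) So ``careful bookkeeping with $\delta$'' substantially understates what is missing; the Ehresmann template plus gradient curves does not by itself yield the theorem.
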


In particular, given a fixed point $p \in X$ the distance function $d(p, \cdot)$ is an admissible, proper function on $X$ (see \cite{PerelmanII, PerelmanMorse} for full definitions of an admissible function). From the Fibration Theorem, it follows 
\begin{coro} 
\label{coro-critical points}
Let $p \in X$ and suppose, for $r_2 > r_1 >0$, that  $\overline{B(p, r_2)}\setminus B(p,r_1)$ contains no critical points of $d(p, \cdot)$. Then $\overline{B(p, r_2)}\setminus B(p,r_1)$ is homeomorphic to $\partial B(p, r_1) \times [r_1, r_2]$.
\end{coro}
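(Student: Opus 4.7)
The plan is to apply Perelman's Fibration Theorem (stated just before the corollary) to the distance function $f(\cdot) = d(p,\cdot)$ restricted to the closed annular region $A := \overline{B(p,r_2)}\setminus B(p,r_1)$. As recalled in Section \ref{subsection-criticalpointtheory}, for a fixed basepoint $p\in X$ the distance function $d(p,\cdot)$ is an admissible function on its complement $X\setminus\{p\}$, and since $r_1 > 0$ we have $A\subset X\setminus\{p\}$, so admissibility holds on an open neighborhood of $A$. Moreover $f|_A$ is proper: since $(X,d)$ is a complete, locally compact length space, $A$ is closed and bounded, hence compact, so the restriction of $f$ to (any open neighborhood contained in) $\{r_1 - \eta < d(p,\cdot) < r_2 + \eta\}$ is proper for small $\eta>0$.

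Next I would use the hypothesis that $A$ contains no critical points of $d(p,\cdot)$, in the metric sense of Section \ref{subsection-criticalpointtheory}. By continuity of the notion of critical point (and compactness of $A$), we may slightly thicken to an open set $U\supset A$ of the form $U = \{r_1 - \eta < d(p,\cdot) < r_2 + \eta\}$ still free of critical points, on which $f$ remains admissible and proper. Perelman's Fibration Theorem then yields that $f : U \to (r_1 - \eta, r_2 + \eta)$ is a locally trivial fiber bundle.

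Now restrict the bundle to the compact subinterval $[r_1, r_2]\subset(r_1-\eta,r_2+\eta)$. Since the base $[r_1,r_2]$ is contractible, any locally trivial fiber bundle over it is globally trivial, so
\[
f^{-1}\!\bigl([r_1,r_2]\bigr)\;\cong\;F\times[r_1,r_2],
\]
where the fiber $F$ can be identified with $f^{-1}(r_1) = \partial B(p,r_1)$. But $f^{-1}([r_1,r_2]) = \overline{B(p,r_2)}\setminus B(p,r_1) = A$, giving the desired homeomorphism $A\cong \partial B(p,r_1)\times [r_1,r_2]$.

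The only subtle step is verifying that the hypotheses of Perelman's Fibration Theorem apply on an \emph{open} neighborhood of the closed annulus, rather than merely on $A$ itself; this is handled by the openness of the regular-set condition (no critical points in a slightly larger annulus) together with the properness afforded by compactness of $A$. Once this technical passage from $A$ to such a $U$ is in place, the result is an immediate consequence of the fibration theorem and the triviality of bundles over a contractible interval.
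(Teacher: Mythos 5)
Your proposal is correct and follows exactly the route the paper intends: the paper states this corollary as an immediate consequence of Perelman's Fibration Theorem applied to the proper, admissible function $d(p,\cdot)$, and you have simply supplied the (standard) details --- openness of the regular set, properness on a slightly thickened annulus, and triviality of a locally trivial bundle over the contractible base $[r_1,r_2]$ --- that the paper leaves implicit.
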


\subsection{Definitions of Ricci Curvature for Metric Measure Spaces}
\label{section-riccidefs}
As discussed in the Introduction, there are various definitions which generalize Ricci curvature lower bounds to the setting of metric measure spaces, and Alexandrov spaces in general. The weakest of these is the Bishop-Gromov condition of Kuwae-Shioya which we define here. 

\subsubsection{Infinitesimal Bishop-Gromov Condition}
\label{subsection-BG}
For a real number $\kappa$, set

\begin{equation}
\label{Eqn-Jacobi}
   s_{\kappa}(r)= \left\{
     \begin{array}{lr}
       \sin(\sqrt{\kappa} r) / \sqrt{\kappa} &~ \textrm{if} ~\kappa > 0\\
       r &~ \textrm{if} ~\kappa = 0\\
       \sinh(\sqrt{|\kappa|} r) / \sqrt{|\kappa}| &~ \textrm{if} ~\kappa < 0       
     \end{array}
   \right.
\end{equation}

Note that the function $s_{\kappa}$ is the solution of the Jacobi equation $\ska''(r) + \ka \ska'(r) = 0$ with initial conditions $\ska(0) = 0$ and $\ska'(0) = 1$.

For $p \in X$ and $0 <t \leq 1$, we will define a subset $W_{p,t} \subset X$ and a map $\Phi_{p,t} \to X$. The map $\Phi_{p,t}$ is called the \textit{radial expansion map}. Set $\Phi_{p,t} (p)  := p \in W_{p,t}$ and a point $x \neq p$ belongs to $W_{p,t}$ if and only if there exists a point $y \in X$ such that $x \in \overline{py}$ and $d_p(x) : d_p(y) = t : 1$, where $\overline{py}$ is a minimal geodesic from $p$ to $y$. Alexandrov spaces are necessarily nonbranching, therefore such a point $y$ as defined above is unique and we set $\Phi_{p,t}(x) :=y$. Furthermore, by the triangle inequality, one can verify that the map $\Phi_{p,t}: W_{p,t} \to X$ is locally Lipschitz. 

We can now define the infinitesimal Bishop-Gromov Condition with respect to the Hausdorff measure $\Haus^n$ on $(X,d)$. 

\begin{definition}
\label{definition-BG}
For real numbers $n \geq 1$ and $K$, we say $\Haus^n$ satisfies the {\em Infinitesimal Bishop-Gromov condition $\BG(K, n)$} if for any $p \in X$ and $t \in (0,1]$, we have 
\begin{equation}\label{BG}
d({\Phi_{p,t}}_{*} \Haus^n )(x) \leq \dfrac{t s_{\kappa}(t d_p(x))^{n-1}}{s_{\kappa}(d_p(x))^{n-1}} d\Haus^n(x).
\end{equation}
for any $x \in X$ such that $d_p(x) < \frac{\pi}{\sqrt{\kappa}}$ if $\kappa > 0$, where ${\Phi_{p,t}}_* \Haus^n$ is the push-forward of $\Haus^n$ by $\Phi_{p,t}$.
\end{definition}

\begin{rmk}
For the purposes of this paper, we will take $\kappa = 0$, thus the condition (\ref{BG}) becomes $d({\Phi_{p,t}}_{*} \Haus^n )(x) \leq t^{n-1}d\Haus^n(x)$.
\end{rmk}

Generally speaking, a complete $n$-dimensional Riemannian manifold $(M^n, g)$ with the natural volume measure satisfies the $\BG(K, n)$ condition if and only if the Ricci curvature of $M^n$ satisfies $\Ric \geq (n-1) K$, for $K \in \mathbb{R}$. In \cite{KuwaeShioya} the authors show that an $n$-dimensional Alexandrov space with curvature $\geq \kappa$ satisfies the $\BG(K, n)$ condition. Furthermore, Ohta showed that if 
$(X,d, \mu)$ satisfies the $\BG(K, n)$ condition above then the Hausdorff dimension of $X$ is bounded above by $n$.

Ultimately we want to disintegrate the Hausdorff measure over the tangent cone as a product metric over the space of directions at the point. We obtain

\begin{lem}
\label{mainlemma}
\label{integration lemma}
Let $X$ be an $n$-dimensional Alexandrov space whose Hausdorff measure $\Haus^n$ satisfies the $\BG(0,n)$ condition. For any $p \in X$ and $R>0$,
\[\Haus^n(B (p,R)) \leq  \int_{v\in \Sigma_pX} \int_0^{\min(R, \cut(v))} r^{n-1} ~dr~d\Haus^{n-1}(v).\]
\end{lem}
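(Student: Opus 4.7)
The plan is to parametrize $B(p,R)$ via the gradient exponential map $\gexp_p\colon C_pX \to X$ and then extract from the $\BG(0,n)$ condition a Bishop-Gromov-type monotonicity for radial sectors, which reduces the bound to a direct integration against the Euclidean model.

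First I would parametrize $B(p,R)$ modulo a $\Haus^n$-null set. For each $v \in \Sigma_pX$ set
\[
\cut(v) := \sup\{r \geq 0 : d(p,\gexp_p(rv)) = r\},
\]
and on the star-shaped domain $D := \{(v,r) \in C_pX : 0 \leq r < \cut(v)\}$ define $\Psi(v,r) := \gexp_p(rv)$. Non-branching of Alexandrov geodesics makes $\Psi$ a bijection from $D$ onto the set of points in $X\setminus\{p\}$ joined to $p$ by a unique minimizing geodesic, whose complement in $X$ has $\Haus^n$-measure zero. Hence $\Haus^n(B(p,R)) = \Haus^n(B(p,R)\cap \Psi(D))$.

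Second, I would derive from the $\BG(0,n)$ inequality a scaling monotonicity for radial sectors. For a Borel set $A \subset \Sigma_pX$ and $\rho \in (0,\inf_{v\in A}\cut(v))$, put $S(A,\rho) := \Psi(A \times [0,\rho))$. Since $\Phi_{p,t}$ moves points along radial rays and these rays do not branch, $\Phi_{p,t}^{-1}(S(A,\rho')) = S(A,t\rho')$ for $t\in(0,1]$ in the admissible range. Applying the push-forward inequality of Definition \ref{definition-BG} with $\kappa = 0$ to $E = S(A,\rho')$ and comparing across scales yields that
\[
\rho \,\longmapsto\, \rho^{-n}\,\Haus^n(S(A,\rho))
\]
is monotone in $\rho$. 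Moreover, as $\rho\to 0^+$ the rescaled space $(X,\rho^{-1}d,p)$ converges in the pointed Gromov-Hausdorff sense to the tangent cone $(C_pX,o)$, with accompanying convergence of Hausdorff measures under the $\BG$ hypothesis, so that
\[
\lim_{\rho\to 0^+} \rho^{-n}\,\Haus^n(S(A,\rho)) \,=\, \tfrac{1}{n}\,\Haus^{n-1}(A).
\]
Combining,
\[
\Haus^n(S(A,\rho)) \,\leq\, \tfrac{\rho^n}{n}\,\Haus^{n-1}(A) \,=\, \int_A \int_0^\rho r^{n-1}\,dr\,d\Haus^{n-1}(v).
\]

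Third, I would assemble the full ball estimate by stratifying $\Sigma_pX$ according to level sets of $\cut(\cdot)$. Cover $B(p,R)\cap\Psi(D)$ by an increasing sequence of finite disjoint unions of sectors $S(A_i,\rho_i)$ with $\rho_i \leq \min(R, \inf_{v\in A_i}\cut(v))$, apply the sector bound to each piece, and pass to the limit by monotone convergence and Fubini to obtain
\[
\Haus^n(B(p,R)) \,\leq\, \int_{\Sigma_pX} \int_0^{\min(R,\cut(v))} r^{n-1}\,dr\,d\Haus^{n-1}(v).
\]

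The main obstacle is the second step: translating the abstract push-forward inequality of $\BG(0,n)$ into the concrete scale monotonicity $\rho^{-n}\Haus^n(S(A,\rho))$, and identifying its $\rho\to 0^+$ limit with $\tfrac{1}{n}\Haus^{n-1}(A)$. Both rely on features specific to Alexandrov spaces — non-branching of geodesics (so that $\Phi_{p,t}$ acts cleanly on sectors), the tangent cone structure at each point, and convergence of the Hausdorff measure under pointed rescaling — that are standard in the Alexandrov literature but nontrivial to combine cleanly with the measure-theoretic formulation of $\BG(0,n)$.
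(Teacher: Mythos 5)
Your argument is correct and rests on the same two pillars as the paper's proof --- a Bishop--Gromov monotonicity extracted from $\BG(0,n)$, and the identification of the density at $p$ with the $\Haus^{n-1}$-measure of $\Sigma_pX$ --- but you package them differently. The paper works with the sphere-area function $a(r)=\Haus^{n-1}(A_p\cap\partial B(p,r))$: it justifies the coarea identity $\Haus^n(B(p,R))=\int_0^R a(r)\,dr$ via the Lipschitz surjectivity of $\Phi_{p,t}$, derives $a(r_1)\ge (r_1/r_2)^{n-1}a(r_2)$ from $\BG(0,n)$, and concludes $a(r)\le r^{n-1}\Haus^{n-1}(\Sigma_pX)$ before integrating. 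You instead work with solid radial sectors $S(A,\rho)$ over Borel $A\subset\Sigma_pX$ and identify the $\rho\to0^+$ limit by blow-up to the tangent cone. Your localized, sector-wise version is actually what is needed to justify honestly the truncation at $\cut(v)$ in the inner integral, a point the paper's global argument passes over quickly; the price is that you must establish measured tangent-cone convergence for arbitrary Borel $A$, whereas the paper only needs the single limit $a(r)/r^{n-1}\to\Haus^{n-1}(\Sigma_pX)$. Two details to pin down: (i) you say only that $\rho^{-n}\Haus^n(S(A,\rho))$ is ``monotone,'' but your conclusion requires it to be \emph{non-increasing}; this is what the Kuwae--Shioya condition gives (note that the inequality in Definition \ref{definition-BG} as printed would yield the opposite monotonicity --- the direction actually used in the paper, $a(r_1)\ge(r_1/r_2)^{n-1}a(r_2)$, is the Bishop one, and that is the one you need); (ii) the identity $\Phi_{p,t}^{-1}\bigl(S(A,\rho')\bigr)=S(A,t\rho')$ holds only for $\rho'\le\inf_{v\in A}\cut(v)$, which you do assume, so your stratification over level sets of $\cut(\cdot)$ is the right way to finish.
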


\begin{proof}
Following the notation set in \cite{KuwaeShioyaI},  
let $A_p \subset M$ be the union of images of minimal geodesics emanating from $p$ and define, for $r >0$,
\begin{equation}
\label{ar}
a(r) := \Haus^{n-1}\left(A_p \cap \partial(B(p,r) \right).
\end{equation}
As in \cite{KuwaeShioyaI}, for $0 < r_1 \leq r_2$ and setting $t : = r_1/r_2$, note that because of the Alexandrov convexity condition, the map $\Phi_{p,t}: A_p \cap \partial(B(p,r)) \cap W_{p,t} \to A_p \cap \partial(B(p,r_2))$ is surjective and Lipschitz continuous.  Taking $t$ very close to 1, makes the Lipschitz constant of $\Phi_{p, t}$ also close to 1. In fact, 
\[\frac{a(r_1)}{a(r_2)}\geq \left(1 - \theta(t-1 \big| \underline{k}) \right), \]
where $\underline{k}$ is a lower bound of the curvature on $X$ and the notation $\theta (x \big| y)$ indicates some function of $x \in \mathbb{R}$ depending on $y$ such that $\theta(x \big| y) \to 0$ as $x \to 0$.
From this it follows that $a(r)$ is integrable and thus, by the Co-Area formula (c.f. Theorem 4.2.1 of  \cite{LinYang}), we have 
\[
\Haus^n(B(p,R) )=  \int_0^R a(r) dr.
\] 
Also,  from the $BG(0,n)$ condition on $X$,  it follows that (summarizing \cite{KuwaeShioyaI})
\[
a(r_1) \geq \frac{r_1^{n-1}}{r_2^{n-1}} a(r_2),
\]
and thus,
\[
\frac{\log a(r_2) - \log a(r_1)}{r_2-r_1} \leq (n-1) \frac{\log r_2^{n-1} - \log r_1^{n-1}}{r_2-r_1}.
\]
Which is equivalent to 
\[
\overline{\log\circ a}~'(r) := \limsup_{h\to 0} \frac{\log(a(r+h)) - \log(a(r))}{h} \leq \frac{n-1}{r}.  
\]
It follows that $\dfrac{\overline{a}~'(r)}{a(r)} \leq \dfrac{n-1}{r}$ for any $r>0$. Now, comparing this quantity with the model space, we define the comparable quantities on  $\mathbb{R}^n$ denoting $a_0(r) = \Haus^{n-1}\left(\partial(B^0(r) \right) = \vol(\mathbb{S}^{n-1}(r))$, we naturally have $\dfrac{\overline{a_0}~'(r)}{a_0(r)} = \dfrac{a_0\,'(r)}{a_0(r)}= \dfrac{n-1}{r}$. Thus, 
\[
\frac{\overline{a}~'(r)}{a(r)} \leq \frac{a_0\,'(r)}{a_0(r)},
\]
and it follows that $\dfrac{a(r)}{a_0(r)}$ is decreasing along radial geodesics $\gamma(t) = \gexp_p(tv)$ in $X$. Note that, within the cut locus, $\partial B(p,r) = \{\gexp( rv) ~|~ v \in \Sigma_pX\}$; and therefore, it follows that $a(r) \leq r^{n-1} \Haus^{n-1}(\Sigma_pX)$. Thus, we have
\begin{equation}
\Haus^n(B (p,R)) = \int_0^R a(r) ~dr \leq \int_{v\in \Sigma_pX} \int_0^{\min(R, \cut(v))} r^{n-1} ~dr~d\Haus^{n-1}(v).
\end{equation}
\end{proof}

\section{Proof of Theorem \ref{mainthm}}
\label{section-proofs}
Throughout the rest of the paper, we assume $X \in \Alex^n[-\ka^2]$ for $\ka \in \mathbb{R}, n \geq 2$ and that assume $\partial X = \emptyset$. One of the key facts that we use is that there is a prevalence of large geodesics in the presence of large volume growth. Even more, as we will see in Lemma \ref{PerelmanMaxVolume},  given enough volume growth it is possible to place a large geodesic in an advantageous location.

\begin{lem}
\label{PerelmanMaxVolume}
For natural number $n \geq 2$, let $X \in \Alex^n[-\ka^2]$ whose Hausdorff measure satisfies the $\BG(0,n)$ condition. Given constants $\epsilon >0, C>1$, there exists a constant $\gamma(\epsilon, C, n) >0$ such that if $\Haus^n(B(p,R)) \geq (1-\gamma) \omega_nr^n$, then the following property holds:
\[ \text{ for any } a \in B(p,r), r>0, \text{ there exists } b \in X\setminus B(p,Cr) \text{  such that } d(a, \overline{pb}) \leq \epsilon r.\]
\end{lem}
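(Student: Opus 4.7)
The plan is to argue by contradiction. Suppose some $a \in B(p, r)$ satisfies $d(a, \overline{pb}) > \epsilon r$ for every minimizing geodesic $\overline{pb}$ with $b \in X \setminus B(p, Cr)$. I will show this forces the ratio $\Haus^n(B(p, R))/R^n$ to drop strictly below $(1-\gamma)\omega_n$ at some scale once $\gamma$ is small in terms of $\epsilon, C, n$, contradicting the hypothesis.

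I would recast the situation on the space of directions at $p$. Let $v_0 \in \Sigma_pX$ be an initial direction of a fixed minimizing geodesic $\overline{pa}$, and consider the spherical ball $W_\delta := B_{\Sigma_pX}(v_0, \delta) \subset \Sigma_pX$ together with the ``long'' directions $V_{Cr} := \{v \in \Sigma_pX : \cut(v) \geq Cr\}$ and the rays $V_\infty := \{v : \cut(v) = \infty\} \subset V_{Cr}$. The first key step is a hinge comparison: using the lower curvature bound on $X$, for $v \in W_\delta$ the radial curve $\gexp_p(tv)$ satisfies $d(\gexp_p(d(p,a) \cdot v), a) \to 0$ as $\delta \to 0$, uniformly in $v$. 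Taking $\delta$ small enough guarantees this distance is $<\epsilon r$, so if any such $v$ also lay in $V_{Cr}$, the induced minimizing geodesic on $[0, Cr+\eta]$ (for small $\eta>0$) would pass within $\epsilon r$ of $a$, violating the standing assumption. Hence $W_\delta \cap V_{Cr} = \emptyset$.

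The second step uses Lemma~\ref{mainlemma} at two scales. Applied at small $R$, combined with the volume hypothesis, the lemma forces $\Haus^{n-1}(\Sigma_pX) \geq (1-\gamma)n\omega_n$; then Bishop-Gromov on $\Sigma_pX \in \Alex^{n-1}[1]$, together with the asymptotic $V_1(\delta) \sim \omega_{n-1}\delta^{n-1}$ for the $\delta$-ball volumes on $\mathbb{S}^{n-1}$, gives $\Haus^{n-1}(W_\delta) \geq c_n(1-\gamma)\delta^{n-1}$. Applied at $R \to \infty$, the same lemma, combined with dominated convergence for $\min(R,\cut(v))^n/R^n$, forces $\Haus^{n-1}(V_\infty) \geq (1-\gamma)n\omega_n$. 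Combining with the first step yields
\[
(1-\gamma) n\omega_n \leq \Haus^{n-1}(V_\infty) \leq \Haus^{n-1}(\Sigma_pX \setminus W_\delta) \leq n\omega_n - c_n(1-\gamma)\delta^{n-1},
\]
which fails for $\gamma = \gamma(\epsilon, C, n)$ sufficiently small, yielding the contradiction.

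The main obstacle is the hinge-comparison step: the naive Toponogov estimate produces $\delta$ depending on the combination $\ka r$, whereas the lemma claims $\gamma$ depends only on $\epsilon, C, n$. I expect this to be resolved by rescaling $X$ by $1/r$ (which preserves the volume hypothesis at all scales while converting the curvature bound to $-(\ka r)^2$) and then exploiting that a near-Euclidean volume profile at all scales, together with $\BG(0,n)$, forces an effectively Euclidean comparison for the geodesic segments involved, so that any residual $\ka$-dependence can be absorbed into $\gamma$.
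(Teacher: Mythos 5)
Your measure-theoretic steps (the two-scale application of Lemma \ref{integration lemma} to get $\Haus^{n-1}(\Sigma_pX)\geq(1-\gamma)n\omega_n$ and $\Haus^{n-1}(V_\infty)\geq(1-\gamma)n\omega_n$, and the Bishop--Gromov bound on $\Haus^{n-1}(W_\delta)$) are sound, but the hinge-comparison step is a genuine gap, and you have located it yourself without closing it. In $\Alex^n[-\ka^2]$ the Toponogov hinge comparison gives only $d\bigl(\gexp_p(tv),\gexp_p(tv_0)\bigr)\leq \frac{\sinh(\ka t)}{\ka}\,\delta$ (approximately, for small $\delta$) at parameter $t=d(p,a)\leq r$, so to force the radial curves in $W_\delta$ to pass within $\epsilon r$ of $a$ you must take $\delta\lesssim \epsilon\ka r/\sinh(\ka r)$, which decays exponentially in $\ka r$. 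Since your final contradiction requires $\gamma\lesssim c_n\delta^{n-1}$, the resulting $\gamma$ depends on $\ka r$ and degenerates as $r\to\infty$, whereas the lemma demands a single $\gamma(\epsilon,C,n)$ valid for all $r>0$. Rescaling by $1/r$ does not repair this: the rescaled space lies in $\Alex^n[-(\ka r)^2]$, so the hinge estimate at unit scale is exactly as bad. The fallback claim that near-Euclidean volume at all scales plus $\BG(0,n)$ forces an ``effectively Euclidean'' hinge comparison is an unproven almost-rigidity assertion of Cheeger--Colding type; making it quantitative with constants depending only on $n,\epsilon,C$ is a substantial theorem in its own right and cannot be assumed here.

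The paper's proof avoids angle comparison entirely, and you should adopt its device: instead of a metric ball $W_\delta\subset\Sigma_pX$ around the direction of $\overline{pa}$, work with the full set $\Gamma=\{v\in\Sigma_pX : d(a,\gexp_p(tv))\leq\epsilon r \text{ for some } t\geq 0\}$ of directions whose radial curves pass near $a$. Its measure is controlled without any curvature-dependent angle estimate, in two opposite directions: (i) assuming all $v\in\Gamma$ stop minimizing before $Cr$, the integration lemma applied to $B(p,\Cbar r)$ for $\Cbar>C$, together with the volume hypothesis, gives the upper bound $\Haus^{n-1}(\Gamma)<\gamma n\omega_n\Cbar^n/(\Cbar^n-C^n)$; (ii) since every point of $B(a,\epsilon r)$ lies on a minimal geodesic from $p$ with direction in $\Gamma$ and length at most $Cr$, one has $\Haus^n(B(a,\epsilon r))\leq\Haus^{n-1}(\Gamma)(Cr)^n/n$, while the relative Bishop--Gromov inequality centered at $a$ (comparing $B(a,\epsilon r)$ to $B(a,(1+\Cbar)r)\supset B(p,\Cbar r)$) gives $\Haus^n(B(a,\epsilon r))\geq(1-\gamma)\omega_n(\Cbar r)^n\epsilon^n/(1+\Cbar)^n$. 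Combining forces $\gamma\geq\bigl[1+\frac{(1+\Cbar)^n}{\epsilon^n}\frac{C^n}{\Cbar^n-C^n}\bigr]^{-1}$, a contradiction for $\gamma$ small depending only on $\epsilon,C,n$. This substitution of a volume lower bound for an angular one is precisely what makes the constant uniform in $r$ and $\ka$.
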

\noindent Here $\omega_n$ denotes the volume of the unit ball in $\mathbb{R}^n$.
\begin{proof}
The proof is by contradiction and the argument relies on a similar computation as that found in Lemma 1.5 of \cite{Munn}, (c.f. \cite{PerelmanVolumeStability} for compact version). Since the setting is now Alexandrov spaces, we must account for the metrically singular set $S_X$. Note that if $p \in S_X$ then $\vol(S_X) < \vol(\mathbb{S}^{n-1}) = \pi$ and thus the bounds obtained on the $\Haus^n$-measure of geodesics balls is strictly smaller and thus gives only a sharper estimate. To make this clear, and for the ease of the reader, we include the necessary computation here. 

Suppose $p \in S_X$, then $\Haus^{n-1}(\Sigma_pX) < \vol(\mathbb{S}^{n-1})$ and set \\$\Gamma = \{v \in \Sigma_pX ~|~ d(a, \gexp_p(tv)) \leq \epsilon R, t \geq 0\}$. Assume (by way of contradiction) that for all $v \in \Gamma$,
\[
\sup \{t >0~|~ \gexp_p(tv)  \text{ is minimizing}\} < Cr.
\]
Taking $\Cbar > C >1$, we have by way of Lemma \ref{integration lemma} 
\begin{eqnarray*}
\Haus(B(p,\Cbar r)) &\leq&\int_{v\in\Sigma_pX} \int_{0}^{\min(\Cbar r, \cut(v))} t^{n-1} ~dt~\Haus^{n-1}(dv)  \\
				&\leq& \int_{v\in\Gamma} \int_{0}^{Cr} t^{n-1} ~dt ~\Haus^{n-1}(dv)+  \int_{v\in\Sigma_p X\setminus \Gamma}\int_{0}^{\Cbar r} t^{n-1} ~dt ~\Haus^{n-1}(dv)\\
				&\leq& \Haus^{n-1}(\Gamma) \int_{0}^{Cr} t^{n-1} ~dt + \Haus^{n-1}(\Sigma_pX \setminus \Gamma) \int_{0}^{\Cbar r} t^{n-1} ~dt\\
				&<& -\Haus^{n-1}(\Gamma) \int_{Cr}^{\Cbar r} t^{n-1} ~dt + \Haus^n(B^0(\Cbar r)).
\end{eqnarray*}
In the last inequality we used the fact that $\Haus^{n-1}(\Sigma_pX) < \vol(\mathbb{S}^{n-1})$. Thus, 
\[\displaystyle{(1-\gamma) \Haus^n(B^0(\Cbar r)) < -\Haus^{n-1}(\Gamma) \int_{Cr}^{\Cbar r} t^{n-1}~dt + \Haus^n(B^0(\Cbar r))}.\] Simplifying, we get a lower bound on the size of $\Gamma$:
\[
\Haus^{n-1}(\Gamma) < \frac{\gamma \Haus^n(B^0(\Cbar r))}{\int_{Cr}^{\Cbar r} t^{n-1}~dt} = \frac{\gamma~ n~ \omega_n \Cbar^n}{\Cbar^n - C^n}.
\]
This upper bound on the size of $\Gamma$ then allows us to, in turn, bound the $\Haus^n$-measure of $B(a, \epsilon r)$:
\begin{equation}
\label{upperbound}
\Haus^n(B(a, \epsilon r)) 
	\leq \Haus^{n-1}(\Gamma) \int_{0}^{Cr} t^{n-1} ~dt \leq \dfrac{\Cbar^n \gamma \omega_n (Cr)^n}{\Cbar^n - C^n}.
\end{equation}

By assumption $\Haus^n$ satisfies the $BG(0,n)$ condition so, by Corollary 3.4 in \cite{KuwaeShioyaI}, we can obtain a lower bound for the $\Haus^n$-measure of $B(a, \epsilon r)$. Namely, 
\begin{eqnarray}
\Haus^n(B(a, \epsilon r )) &\geq& \Haus^n(B(a, r + \Cbar r)) \frac{\epsilon^n}{(1+\Cbar)^n}\\
					\label{lowerbound}&\geq& \Haus^n(B(p, \Cbar r)) \frac{\epsilon^n}{(1+\Cbar)^n}\geq (1-\gamma) \omega_n (\Cbar r)^n \frac{\epsilon^n}{(1+ \Cbar)^n}.
\end{eqnarray}
Combining (\ref{upperbound}) and (\ref{lowerbound}), and solving for $\gamma$, we obtain the bound
\[\gamma \geq \left[ 1 + \frac{(1+\Cbar^n)}{\epsilon^n} \frac{C^n}{\Cbar^n-C^n}\right]^{-1}.\]
Since $\Cbar$ was taken arbitrary we can let $\Cbar \to \infty$ and get a lower bound for $\gamma$ depending only on $\epsilon, C$ and $n$. Setting $\gamma(\epsilon, C, n) < \left[ 1 + \frac{C^n}{\epsilon^n}\right]^{-1}$ we obtain a contradiction and the Lemma is proved.

\end{proof}
Adapting techniques familiar from classical Riemannian geometry, we prove
\begin{thm}
For an integer $n \geq 2$, let $(X,d) \in \Alex^n[-\ka^2]$ be a complete, noncompact Alexandrov space whose Hausdorff measure satisfies the $BG(0,n)$ condition. There exists an $\epsilon(n,\ka) >0$ such that if, for $p \in X$
\[ \Haus^n(B(p,r)) \geq  (1-\epsilon)\omega_n r^n, \quad \text{ for all } r >0,\]
then $(X,d)$ has finite topological type. 
\end{thm}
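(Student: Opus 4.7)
The plan is to apply critical point theory to the distance function $d_p := d(p, \cdot)$: I will show that there exists some $R_0 = R_0(n, \ka, \epsilon)$ such that $d_p$ has no critical points in $X \setminus \overline{B(p, R_0)}$. Corollary \ref{coro-critical points} then produces a homeomorphism
\[
X \setminus \overline{B(p, R_0)} \;\cong\; \partial B(p, R_0) \times [R_0, \infty),
\]
whence $X$ deformation retracts onto $\overline{B(p, R_0)}$ and has finite topological type.

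I establish the absence of far critical points by contradiction. Suppose there is a sequence of critical points $q_k$ of $d_p$ with $r_k := d(p, q_k) \to \infty$. Fix small $\delta > 0$ and large $C > 1$, to be determined, and require the volume-deficit $\epsilon$ to satisfy $\epsilon \leq \gamma(\delta, C, n)$, with $\gamma$ as in Lemma \ref{PerelmanMaxVolume}. That lemma, applied at $a = q_k$, then produces a point $b_k \in X \setminus B(p, C r_k)$ and a minimizing geodesic $\overline{p b_k}$ with $h_k := d(q_k, \overline{p b_k}) \leq \delta r_k$.

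The contradiction will come from two competing estimates on the Abresch-Gromoll excess
\[
e_k := d(p, q_k) + d(q_k, b_k) - d(p, b_k).
\]
For the lower bound, criticality of $q_k$ furnishes a minimizing $q_k$-to-$p$ geodesic whose initial direction makes angle at most $\pi/2$ with a chosen minimizing $q_k$-to-$b_k$ geodesic direction; Toponogov comparison in the $-\ka^2$-plane then gives $\widetilde{\angle}_{-\ka^2} p\, q_k\, b_k \leq \pi/2$, and the hyperbolic law of cosines
\[
\cosh(\ka\, d(p, b_k)) \leq \cosh(\ka\, r_k)\,\cosh(\ka\, d(q_k, b_k))
\]
forces $e_k \geq (\log 2)/\ka + o(1)$ as $r_k, d(q_k, b_k) \to \infty$. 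For the upper bound, I invoke the Alexandrov excess estimate of the Appendix (the $\BG(0, n)$-analogue of the Abresch-Gromoll inequality), which bounds $e_k$ by a quantity that, in the joint regime $\delta \to 0$ and $C \to \infty$, can be driven strictly below $(\log 2)/\ka$. Choosing $\delta$ sufficiently small and $C$ sufficiently large, and shrinking $\epsilon$ to the corresponding $\gamma(\delta, C, n)$, yields the required contradiction.

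The main obstacle is the Alexandrov excess estimate itself, whose proof under only the $\BG(0, n)$ hypothesis is the technical core of the paper; it is carried out in the Appendix along the lines of Abresch-Gromoll, with the classical Laplacian comparison theorem replaced by a subharmonic-barrier argument adapted to the infinitesimal Bishop-Gromov setting. A secondary point of care is that the critical point $q_k$ may be a cut point of $p$ or lie near the singular set $S_X$, but the Grove-Shiohama formulation of criticality and the Petrunin-type lemma on isometry of direction spaces along geodesics ensure that the angle-comparison step goes through. Once these ingredients are available, the parameter-balancing above combined with a single appeal to the Fibration Theorem completes the argument.
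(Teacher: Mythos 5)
Your outline reproduces the paper's strategy step for step: Lemma \ref{PerelmanMaxVolume} places a long minimal geodesic $\overline{pb_k}$ within distance $\delta r_k$ of the critical point $q_k$, the excess estimate (\ref{excess}) bounds the excess of the resulting thin triangle from above, criticality plus Toponogov comparison in the $(-\ka^2)$-plane bounds it from below by $\tfrac{1}{\ka}\ln 2 + o(1)$, and Corollary \ref{coro-critical points} converts the absence of far critical points into finite topological type. Structurally there is nothing different here from the paper's argument.

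The gap is in the one sentence you do not compute: that the excess upper bound ``can be driven strictly below $(\log 2)/\ka$ in the joint regime $\delta\to 0$, $C\to\infty$.'' Write it out. With $h_k\le \delta r_k$ and $s_k=\min\{d(p,q_k),\,d(q_k,b_k)\}=r_k$ (the second distance is at least $(C-1)r_k$, hence at least $r_k$ no matter how large you take $C$), the estimate (\ref{excess}) gives
\[
e_k \;\le\; 8\left(\frac{(\delta r_k)^n}{r_k}\right)^{\frac{1}{n-1}} \;=\; 8\,\delta^{\frac{n}{n-1}}\, r_k,
\]
which tends to infinity with $r_k$ for every fixed $\delta>0$; enlarging $C$ changes nothing because $s_k$ saturates at $r_k$, and shrinking $\delta$ only rescales the slope. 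Meanwhile the Toponogov lower bound is the constant $\tfrac{1}{\ka}\ln 2$, so no contradiction is produced once $r_k$ is large. You also cannot let $\delta$ depend on $r_k$, since then $\gamma(\delta,C,n)\to 0$ and the hypothesis $\epsilon\le\gamma$ becomes vacuous. A dimensional check confirms the obstruction: the volume hypothesis is scale invariant while $\sect\ge-\ka^2$ is not, so no scale-invariant chain of estimates can output a threshold $\epsilon(n,\ka)$ in which $\ka$ (units of inverse length) appears. To close the argument you would need either a height bound $h_k\le\mathrm{const}$ independent of $r_k$ (which Lemma \ref{PerelmanMaxVolume} does not provide), or a curvature lower bound decaying quadratically in $d_p$ so that the Toponogov lower bound also grows linearly in $r_k$ (as in \cite{Ordwayetal}), or an $\epsilon$ decaying in $r$. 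For what it is worth, the paper's own proof elides exactly this step by an algebra slip --- the factor of $R_i$ is dropped in passing from $8\bigl((\epsilon R_i)^n/R_i\bigr)^{1/(n-1)}$ to $8\epsilon^{n/(n-1)}$ --- so you have faithfully reproduced the intended argument, but as written neither version is complete.
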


\begin{proof}
By Corollary \ref{coro-critical points}, it suffices to show that the distance function $d(p, \cdot)$ has no critical points outside some large ball. Suppose by contradiction that we can find a sequence of critical points $\{x_i\}_{i=1}^{\infty}$ such that $d(p, x_i) \to \infty$ as $i \to \infty$. Denote $R_i = d(x_i, p)$. 

Given $n$ and $\kappa$ as in the statement of the Theorem, choose some $\epsilon(n,\kappa) < \left(\frac{\ln(2)}{8\kappa} \right)^{1-1/n}$. Following the proof of Lemma \ref{PerelmanMaxVolume}, since $\alpha_X \geq \alpha(n, \kappa):= 1-\left(1+ \frac{2^n}{\epsilon^n} \right)^{-1}$, it follows that for each $x_i$, there exists some $b_i \in X \setminus B(p, 2R_i)$ such that the minimal geodesic $\overline{pb_i}$ connecting $p$ to $b_i$ satisfies $d(x_i, \overline{b_i}) \leq \epsilon R_i$. Let $q_i \in \overline{pb_i}$ such that $d(p, q_i) = 2R_i$. 

For each $k$, let $\beta_i$ be an arbitrary minimizing path connecting $x_i$ to $q_i$. Since $x_i$ is a critical point of $d(p, \cdot)$ there exists a minimizing path $\alpha_i$ connecting $x_i$ to $p$ such that $\cos \angle(\beta'(0), \alpha'(0)) \leq \pi/2$. Consider now the geodesic triangle  $\Delta(\alpha_i, \beta_i, \overline{pq_i})$ formed by these paths joined with $\overline{pq_i}$. 

Note that 
\[h(x_i) := d(x_i,\overline{pq_i}) \leq \epsilon R_i,\]
and 
\[s(x_i) := \min \{d(p, x_i), d(x_i, q_i) \}= R_i.\]
Thus, by the excess estimate for Alexandrov spaces (see (\ref{excess}) in the Appendix) applied to the triangle $\Delta px_iq_i$, we have
\begin{eqnarray}
e_{p,q_i}(x_i) &\leq& 8 \left( \frac{h(x_i)^n}{s(x_i)}\right)^{1/(n-1)}\\
		& \leq & 8  \left( \frac{(\epsilon R_i)^n}{R_i}\right)^{1/(n-1)}\\
		& \leq &  8 \epsilon^{n/(n-1)} < \frac{1}{\kappa}\ln(2). \label{excessupperbound}
\end{eqnarray}

Similarly, we can apply the Toponogov Comparison theorem for Alexandrov spaces to  $\Delta(\alpha_i, \beta_i, \overline{pq_i})$ to bound the excess from below. Since $\angle(px_iq_i) \leq \pi/2$, the hyperbolic law of cosines ensures that 
\begin{eqnarray*}
\cosh \ka d(p,q_i) &=& \cosh \ka d(p, x_i) \cosh \ka d(x_i, q_i) - \sinh \ka d(p, x_i) \sinh \ka d(x_i, q_i) \cos \angle(px_iq_i)\\
				&\leq& \cosh \ka d(p, x_i) \cosh \ka d(x_i, q_i).
\end{eqnarray*}
Thus, it follows
\[e^{\ka d(p, q_i)} \leq \cosh(\ka R_i) e^{\ka d(x_i,q_i) -\ka R_i},\]
which further simplifies to give
\[d(x_i, q_i) \geq d(p, q_i) - \frac{1}{\ka} \ln\left(\frac{1-e^{-2\ka R_i}}{2} \right) .\]
Therefore, since $d(p, q_i) = 2R_i, d(p, x_i) = R_i$,
\begin{eqnarray}
e_{p,q_i}(x_i) &=& d(p,x_i)  + d(q_i, x_i) -d(p, q_i)\\
			&\geq &  - \frac{1}{\ka} \ln\left(\frac{1-e^{-2\ka R_i}}{2} \right)=  \frac{1}{\ka} \ln\left(\frac{2}{1-e^{-2\ka R_i}} \right).
\end{eqnarray}
Since $\frac{1}{\ka}\ln\left(\frac{2}{1-e^{-2\ka R_i}} \right) \searrow\frac{1}{\ka} \ln(2)$ as $R_i \to \infty$, we arrive at a bound for $e_{p,q_i}(x_i)$ contradicting (\ref{excessupperbound}) for some $R_i$ large. Thus, there are no critical points outside the ball $B(p, R_i)$ and the theorem is proved. 
\end{proof}

\begin{rmk}
In \cite{KuwaeShioyaII} the authors prove a Laplacian comparison theorem and topogical splitting theorem for weighted Alexandrov spaces, i.e. Alexandrov spaces equipped with a measure $\mu$ (not necessarily the Hausdorff measure) which satisfies the $\BG(0,n)$ condition. One would anticipate that since the Laplacian comparison plays an integral role in the proof of the excess estimate (as we see in the Appendix) and thus the proof above, a comparable theorem should hold for a metric measure space $(X,d,\mu)$ where $(X,d) \in \Alex^n[-\ka^2]$ and $\mu$ which satisfies the $\BG(0,n)$ condition. We have not worked out the explicit details of this argument and but leave it for the interested reader. 
\end{rmk}

\section{Appendix}
\label{section-appendix}
In this section we sketch proof of the excess estimate in Alexandrov spaces and describe the necessary ingredients. The original excess estimate was proven by Abresch-Gromoll in \cite{AbreschGromoll1990} for Riemannian manifolds with Ricci curvature bounded below by $(n-1)H$, for $H \in \mathbb{R}$. We adapt their argument to generalize the excess estimate to $n$-dimensional Alexandrov spaces whose Hausdorff measure satisfies a condition giving the space the property of non-negative Ricci curvature (see Section \ref{section-introduction} for discussion of various notions of Ricci curvature lower bounds in Alexandrov space and metric measure spaces in general). 

Throughout, let $(X,d)$ denote an $n$-dimensional Alexandrov space with curvature $\geq -\ka^2$ for some $\ka \in \mathbb{R}$. Central to our proof of the excess estimate is an analysis of harmonic functions on $X$. The theory of (sub/super) harmonic functions on Alexandrov spaces has been developed by Kuwae-Machigashira-Shioya, Petrunin, Kuwae-Shioya and others \cite{KMS, PetruninHarmonic, KuwaeShioyaI, KuwaeShioyaII} in reference to an underlying $n$-dimensional Hausdorff measure $\Haus^n$ on the $(X,d)$. As initiated in \cite{KuwaeShioyaII}, one could also consider Alexandrov spaces equipped with a positive Radon measure $\mu$ satisfying some generalized weak-Ricci curvature lower bound condition; thus studying metric measure spaces $(X,d,\mu)$ where $(X,d)$ is an Alexandrov space of curvature $\geq -\ka^2 > - \infty$ and $\mu$ a Radon measure satisfying $BG(0,n)$. We state the necessary principles below for $\mu$ a Radon measure keeping in mind that ultimately we will take $\mu$ to be $ \Haus^n$. 

\subsection{Laplacian Comparison and Weak Maximum Principle on Alexandrov spaces}
\label{subsection-laplacian}
Both the Laplacian comparison and the weak maximum principle are very a important tools when studying Riemannian manifolds with a lower Ricci curvature bound. Following \cite{KuwaeShioyaII}, here we collect the tools and terminology we need to describe the  Laplacian comparison on weighted Alexandrov spaces $(X,d,\mu)$ as well as the weak maximum principle. We refer the reader to Section \ref{subsection-alexandrov} for notation.

\subsubsection{Maximum Principle for $\mu$-subharmonic Functions}
\label{subsection-maximum}
The original proof of the excess estimate for Riemannian manifolds \cite{AbreschGromoll1990} relies on the maximum principle for subharmonic functions. However, since the Riemannian metric of $(X,d)$ has low regularity we cannot simply apply the usual maximum principle in this setting. Instead, we follow work of Kuwae \cite{Kuwae} and employ theory of $\mu$-subharmonic functions and the maximum principle in this setting. Here we describe in more detail these concepts.

\subsubsection{Harmonic Functions} Let $\Omega \subset X$ be a relatively compact open, connected domain and denote $\Lip(\Omega)$ the set of Lipschitz functions on $\Omega$. Using the natural $C^0$-Riemannian structure on the set of nonsingular points of $X$ we can define $L^2$ and  $W^{1,2}$ functions and their norms. The space of Sobolev functions $W^{1,2}(\Omega, \mu)$ is taken to be the closure of $\Lip(\Omega)$ in the norm:
\[|| f ||^2_{W^{1,2}(\Omega, \mu)} := \int_{\Omega} (f^2 + |\nabla f|^2) ~d\mu,  \]
where $\nabla f $ is the maximal rate of growth of $f$ at a point. Let $W^{1,2}_0(\Omega, \mu)$ denote the $W^{1,2}$-closure of the set of $W^{1,2}$ functions of compact support in $\Omega$ and define a canonical {\em Dirichlet form} by 
\[\mathcal{E}^{\mu}(f,g) :=  \int_{\Omega} \langle \nabla f, \nabla g \rangle ~d\mu, \quad \text{ for all } f, g \in \Lip_0(\Omega).\]
Note that here $\langle \cdot , \cdot \rangle$ denotes the inner product induced from the Riemannian structure on $X$ outside the singular set and thus is defined $\mu$-a.e. since locally Lipschitz functions on $\Omega$ are differentiable $\mu$-a.e. on $\Omega$. 

Now we can define
\begin{definition}
A function $f \in W^{1,2}_{0, \loc}(\Omega; \mu)$ is said to be $\mu$-subharmonic provided 
\[
\int_{\Omega} \left< \nabla f, \nabla \phi \right> d\mu \leq 0
\]
for any nonnegative smooth function $\phi \in \Lip_0(\Omega)$. 
\end{definition}

For which we have the following maximum principe of Kuwae-Shioya \cite{KuwaeShioyaII}

\begin{thm}
\label{thm-maxprinciple}
Let $f \in W^{1,2}_{0, \loc}(\Omega; \mu)$ be a continuous $\mu$-subharmonic function and suppose $f$ attains its maximum on $\Omega$. Then, $f$ is constant on $\Omega$. 
\end{thm}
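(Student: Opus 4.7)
The plan is to prove this strong maximum principle by showing that the maximum set
\begin{equation*}
E := \bigl\{x \in \Omega : f(x) = \sup_{\Omega} f\bigr\}
\end{equation*}
is simultaneously open and closed in the connected domain $\Omega$, which, together with $E \neq \emptyset$, forces $E = \Omega$. Continuity of $f$ gives that $E$ is relatively closed, so the entire argument reduces to proving that $E$ is open.

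To establish openness, I fix $x_0 \in E$, choose $R > 0$ with $\overline{B(x_0, 2R)} \subset \Omega$, and aim to show $B(x_0, R) \subset E$. Set $M := \sup_\Omega f$ and $u := M - f \geq 0$; by linearity of the Dirichlet form $\mathcal{E}^\mu$, the function $u$ is $\mu$-superharmonic, i.e.\ $\int \langle \nabla u, \nabla \phi\rangle\, d\mu \geq 0$ for every non-negative $\phi \in \Lip_0(\Omega)$. Testing with $\phi = \eta^2 (c - u)^+$, where $c > 0$ is small and $\eta$ is a Lipschitz cutoff equal to $1$ on $B(x_0, R)$, supported in $B(x_0, 2R)$, and satisfying $|\nabla \eta| \leq 2/R$, and absorbing the resulting cross term via Young's inequality, I expect to obtain the Caccioppoli-type bound
\begin{equation*}
\int_{B(x_0, R) \cap \{u < c\}} |\nabla u|^2\, d\mu \;\leq\; \frac{C}{R^2}\, c^2\, \mu\bigl(B(x_0, 2R)\bigr).
\end{equation*}

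The core of the argument is then a weak Harnack inequality for non-negative $\mu$-superharmonic functions: there exist $p > 0$ and $C_0 > 0$, depending only on $n$ and on the doubling and Poincaré constants of $(X, d, \mu)$, such that
\begin{equation*}
\left( \frac{1}{\mu(B(x_0, R))}\int_{B(x_0, R)} u^p\, d\mu \right)^{1/p} \;\leq\; C_0 \inf_{B(x_0, R/2)} u.
\end{equation*}
Granted this estimate, the continuity of $u$ together with $u(x_0) = 0$ makes the right-hand side vanish, forcing $u \equiv 0$ almost everywhere on $B(x_0, R)$, and hence everywhere on $B(x_0, R)$ by continuity. Thus $B(x_0, R) \subset E$, so $E$ is open and the proof is complete.

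The main obstacle is the weak Harnack inequality itself in the weighted Alexandrov setting. Deriving it from the Caccioppoli estimate requires a Moser iteration, which rests on three nontrivial ingredients: (i) volume doubling on $(X, d, \mu)$, which follows from the $\BG(0,n)$ condition by work of Kuwae--Shioya, (ii) a $(1, 2)$-Poincaré inequality on geodesic balls, established on Alexandrov spaces by Kuwae--Machigashira--Shioya, and (iii) careful handling of the low regularity of the underlying Riemannian tensor near the singular set $S_X$, which is accommodated by working throughout with the intrinsic Dirichlet form $\mathcal{E}^\mu$ and the associated $W^{1,2}$-framework. Once these prerequisites are invoked from the literature, the iteration proceeds exactly as in the classical De Giorgi--Nash--Moser case; rederiving them from scratch is the genuinely technical part.
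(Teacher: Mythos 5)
The paper does not actually prove this statement: it is quoted verbatim from Kuwae--Shioya \cite{KuwaeShioyaII} (see also \cite{Kuwae}, where the maximum principle is derived in the framework of local semi-Dirichlet forms by potential-theoretic arguments), so there is no in-paper proof to compare against. Your outline is the other standard route --- connectedness plus the weak Harnack inequality for nonnegative supersolutions of the form $\mathcal{E}^{\mu}$ --- and the skeleton is sound: $E$ is closed by continuity, and the weak Harnack inequality applied to $u = M - f$ on a ball centered at a maximum point forces $u \equiv 0$ there, so $E$ is open. Two caveats. First, your Caccioppoli estimate with test function $\eta^2(c-u)^+$ is not actually the input to the weak Harnack inequality; the Moser iteration for supersolutions runs on negative powers of $u+\varepsilon$ and on $\log(u+\varepsilon)$, so that paragraph is orphaned and could be deleted. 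Second, and more substantively, the entire weight of the argument sits in the weak Harnack inequality, whose prerequisites you must be careful to source correctly: volume doubling and the $(1,2)$-Poincar\'e inequality are established by Kuwae--Machigashira--Shioya \cite{KMS} for the Hausdorff measure $\Haus^n$, which suffices for the application in this paper, but the theorem as stated is for a general Radon measure $\mu$ satisfying $\BG(0,n)$, for which the weighted doubling and Poincar\'e inequalities must instead be taken from Kuwae--Shioya's weighted theory \cite{KuwaeShioyaI, KuwaeShioyaII}. With those inputs the De Giorgi--Nash--Moser machinery on PI-doubling Dirichlet spaces applies and your argument closes; what you gain over the cited proof is a more quantitative statement (a Harnack inequality, not just the maximum principle), at the cost of invoking considerably heavier machinery than the qualitative statement requires.
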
 

\subsubsection{Laplacian comparison for the distance function}

Now fix a point $p \in X$ and denote $d(p,\cdot) :X \to \mathbb{R}$ the distance function from $p$. This map is differentiable outside the singular set and away from Cut$_p \cup  \{p\}$ and the gradient field $\nabla d(p, \cdot)$ is continuous at all differentiable points. Because of the lack of regularity of the Riemannian metric on an arbitrary Alexandrov space,  the standard proof of the Laplacian comparison theorem for Riemannian manifolds with lower Ricci curvature bounds does not work in our setting. However, up to a set of measure zero we have by \cite{KuwaeShioyaI}

\begin{thm}
\label{laplaciancomparison}
For a positive Radon measure $\mu$ on $(X,d)$ with full support and which satisfies the $BG(0,n)$ condition on $\Omega$, we have
\[\int_X \langle \nabla d_p, \nabla \phi \rangle ~d\mu \geq \int_X \left(-\frac{n-1}{d_p} \right) ~\phi ~d\mu,\]
for any nonnegative function $\phi \in \Lip_0(\Omega \setminus \{p\}).$
\end{thm}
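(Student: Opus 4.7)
The plan is to mirror the classical Riemannian proof of the Laplacian comparison $\Delta d_p \leq (n-1)/d_p$, substituting the Ricci curvature bound with the infinitesimal volume-comparison content of $\BG(0,n)$. Concretely, I would disintegrate $\mu$ in ``polar coordinates'' around $p$ via the gradient exponential map $\gexp_p$ of Section~\ref{subsection-gradientcurves}, use $\BG(0,n)$ to control the radial density, and integrate by parts in the radial variable. The nonnegative boundary contribution coming from the cut locus is what produces an inequality rather than an identity.

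First I would set up the radial decomposition. For $v \in \Sigma_p X$, let $\cut(v) \in (0,\infty]$ denote the cut value of the radial curve $t \mapsto \gexp_p(tv)$, and write $W_p := \{\gexp_p(rv) : v \in \Sigma_p X,\ 0 < r < \cut(v)\}$ for the region reached by minimizing radial curves. On $W_p$ the map $(r,v) \mapsto \gexp_p(rv)$ is bijective, $d_p(\gexp_p(rv)) = r$, and $\mu$ disintegrates as $d\mu = J(r,v)\,dr\,d\nu(v)$ for some Borel density $J \geq 0$ and a finite measure $\nu$ on $\Sigma_p X$. The $\BG(0,n)$ condition, applied to the radial expansion $\Phi_{p,t}$, is exactly the statement that for $\nu$-a.e.\ $v$ the function $r \mapsto J(r,v)/r^{n-1}$ is non-increasing on $(0,\cut(v))$; in particular $J(0^+,v) = 0$ in the sense that $J(r,v) = O(r^{n-1})$ near $r=0$, and the distributional derivative satisfies $-\partial_r J \geq -(n-1)J/r$.

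Next, since $\nabla d_p$ agrees with the radial unit vector $\partial_r$ $\mu$-almost everywhere on $W_p$ (both the complement of $W_p$ in $X$ and the singular set carry no $\mu$-mass by $\BG(0,n)$), we can rewrite the test integral as
\[
\int_X \langle \nabla d_p, \nabla \phi \rangle\,d\mu = \int_{\Sigma_p X} \int_0^{\cut(v)} (\partial_r \phi)(r,v)\, J(r,v)\,dr\,d\nu(v).
\]
Integrating the inner integral by parts in $r$ produces a boundary term $\phi(\cut(v),v)\,J(\cut(v),v)$, which is $\geq 0$ because $\phi \geq 0$; the boundary contribution at $r=0$ vanishes because $\phi$ is compactly supported in $\Omega \setminus \{p\}$ and $J(r,v) \to 0$ as $r \to 0^+$. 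Discarding the nonnegative boundary term at the cut value and applying the BG-derived bound $-\partial_r J \geq -(n-1)J/r$ gives
\[
\int_X \langle \nabla d_p, \nabla \phi \rangle\,d\mu \geq -\int_{\Sigma_p X}\!\int_0^{\cut(v)} \frac{n-1}{r}\, \phi(r,v)\,J(r,v)\,dr\,d\nu(v) = -\int_X \frac{n-1}{d_p}\,\phi\,d\mu,
\]
which is the claim.

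The main technical obstacle is justifying the above disintegration and the pointwise identification $\nabla d_p = \partial_r$ with the regularity available in the Alexandrov/BG setting. The gradient of $d_p$ is only defined $\mu$-a.e., and the cut locus of $p$ together with the singular set $S_X$ have to be shown to be $\mu$-negligible so that the radial parameterization captures full measure; this is precisely the content of the structural results of Kuwae-Shioya used in Lemma~\ref{integration lemma} and the Perelman-Petrunin gradient-curve theory. A secondary subtlety is the distributional interpretation of $-\partial_r J \geq -(n-1)J/r$: since $J$ is only known to be monotone in $r/r^{n-1}$ (hence of locally bounded variation), the integration by parts should be carried out against the absolutely continuous part while the singular part of $-\partial_r J$ is nonnegative and can be discarded, again preserving the inequality direction. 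Once these measure-theoretic points are in place the argument is essentially a direct transcription of the Riemannian proof.
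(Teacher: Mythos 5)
The paper does not actually prove this statement: it is quoted verbatim from Kuwae--Shioya \cite{KuwaeShioyaI}, so there is no internal proof to compare against. That said, your sketch is essentially the argument that the cited source carries out: disintegrate $\mu$ in polar form $d\mu = J(r,v)\,dr\,d\nu(v)$ over the region swept by minimizing radial curves, read $\BG(0,n)$ as the monotonicity of $r \mapsto J(r,v)/r^{n-1}$, integrate by parts in $r$, and discard the nonnegative boundary term at the cut value together with the correctly-signed singular part of $\partial_r J$. The sign bookkeeping in your BV integration by parts is right, and you correctly identify where the real work is: showing that the cut locus and the non-differentiability set of $d_p$ are $\mu$-null for a general Radon measure satisfying $\BG(0,n)$ (not just for $\Haus^n$), and establishing that the conditional measures of the disintegration are absolutely continuous in $r$ with the stated monotonicity. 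Those two points are not cosmetic --- they are the bulk of Kuwae--Shioya's proof --- so as written your argument is a correct outline rather than a complete proof, but the outline is the right one.

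Two small caveats. First, the monotonicity you extract, ``$J(r,v)/r^{n-1}$ non-increasing,'' is the intended content of $\BG(0,n)$ and is consistent with how the paper uses the condition in Lemma \ref{integration lemma} (where $a(r_1) \geq (r_1/r_2)^{n-1} a(r_2)$ is derived), but note that the inequality in Definition \ref{definition-BG} as printed points the other way; if you chase the literal pushforward inequality there you get the reverse monotonicity, so be aware you are using the corrected (Kuwae--Shioya) form of the condition. Second, your remark that $J(r,v) = O(r^{n-1})$ near $r=0$ does not follow from the monotonicity (which gives a lower, not an upper, bound near the origin); this is harmless here because $\phi$ is supported away from $p$, so the boundary term at $r=0$ vanishes for the trivial reason that $\phi \equiv 0$ there.
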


Note, the Laplacian Comparison theorem above can be stated more generally for Radon measures satisfying $BG(k,n)$ for $k,n \in \mathbb{R}$. We state only the case $k=0$ here because it applies directly to our arguments in Section \ref{section-proofs} and to simplify the exposition.

Equipped with the Laplacian comparison theorem and the maximum principle for sub-harmonic functions, we are now have the ingredients we need to prove the excess estimate. Since the proof follows the original argument of Abresch-Gromoll \cite{AbreschGromoll1990} we include only a brief sketch here (for the case $BG(0,n)$) just to convey the main ideas.

\subsection{Excess Estimate for Alexandrov Spaces}
For any metric space $(X,d)$ and points $p,q \in X$, the excess function is given by 
\[e_{p,q}(x) = d(x,p) + d(x,q) - d(p,q), \quad \text{ for } x \in X.\]
Note that $e_{p,q}$ is necessarily a Lipschitz function and has Lipschitz constant at most 2. Setting $h(x) := d(x, \overline{pq})$, the height of the triangle $\Delta pxq$, it follows immediately from the triangle inequality that $0 \leq e_{p,q}(x) \leq 2h(x)$, for $x \in X$.

The strength of the excess estimate is that it guarantees a stronger upper bound in the presence of a lower Ricci curvature bound. And, as we will show, the estimate is particularly useful to long thin long thin triangles. In this subsection, we use the tools outlined above to sketch the proof of the excess estimate for Alexandrov spaces with $\BG(0,n)$. The argument follows the original proof of Abresch-Gromoll.

As motivated by the argument in in \cite{AbreschGromoll1990}, define the function $\varphi_{n, K}: (0, l] \times \mathbb{R} \to [0, \infty)$ by
\begin{equation}
\label{phi}
\varphi_{n,\kappa}(r, l) = \iint_{r \leq t\leq \tau \leq l} \left( \frac{s_{\kappa}(\tau)}{s_{\kappa}(t)}\right)^{n-1} ~d\tau dt,
\end{equation}
where the functions $s_{\kappa}(\cdot)$ are the Jacobi equations given in (\ref{Eqn-Jacobi}) and when $K >0$ we assume $Kl \leq \pi^2$.

This $\varphi_{n,K}(\cdot, \cdot)$ is used in proving the general form of the excess estimate of Abresch-Gromoll for Riemannian manifolds with $\Ric \geq (n-1)K$. Since we focus here only on the case when $K=0$, (\ref{phi}) becomes
\[
 \varphi_{n, 0} (r, l) = \iint_{r \leq t \leq \tau \leq l} \left(\frac{\tau}{t}\right)^{n-1} ~d\tau dt = \frac{1}{2n} \left(r^2 -\frac{n}{n-2}l^2 + \frac{2}{n-2} l^n r^{-(n-2)} \right).
\]
Fixing $l>0$ and considering $\varphi$ as a function on $r \in (0,l]$, the relevant properties of $\varphi$ are that 
\begin{enumerate}[(a)]
\item $\varphi(\cdot,l) >0$ and is decreasing on $(0, l)$
\item $\varphi(l,l) = 0$
\item $\Delta \varphi(\cdot,l) = 1$
\end{enumerate}
We show

\begin{thm}
For an integer $n \geq 2$ and $\ka \in \mathbb{R}$, let $(X,d) \in \Alex^n[-\ka^2]$ and suppose the Hausdorff measure $\Haus^n$ on $X$ satisfies the $\BG(0,n)$ condition of Definition \ref{definition-BG}.
Fix $p, q \in X$ and let $x\in X$ so that $h(x) \leq \frac{1}{2}\min\{d(p,x), d(q,x)\}$. Then,
\begin{equation}
\label{excess}
e_{p,q}(x) \leq 8 \left(\frac{h(x)^n}{s(x)} \right)^{\frac{1}{n-1}},
\end{equation}
where $s(x) = \min\{d(p,x), d(q,x)\}$.
\end{thm}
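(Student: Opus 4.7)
The plan is to adapt Abresch--Gromoll's classical argument \cite{AbreschGromoll1990} to the Alexandrov setting, using the Laplacian comparison (Theorem \ref{laplaciancomparison}) in place of its smooth counterpart, the maximum principle for weakly $\Haus^n$-subharmonic functions (Theorem \ref{thm-maxprinciple}) in place of the classical maximum principle, and the radial comparison function $\varphi_{n,0}(\cdot, l)$ recorded in \eqref{phi} as the barrier; its three listed properties are precisely those exploited by the original argument.

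First I would fix a parameter $l$, to be optimized at the end, with $h(x) < l \leq s(x)/2$. On the ball $B(x, l)$ the triangle inequality forces $d_p, d_q \geq s(x) - l \geq s(x)/2 > 0$, so Theorem \ref{laplaciancomparison} applied to both $d_p$ and $d_q$ yields, weakly in the distributional sense,
\[
\Delta e_{p,q} \;\leq\; \frac{n-1}{d_p} + \frac{n-1}{d_q} \;\leq\; \frac{2(n-1)}{s(x)-l} \;=:\; C_0(l).
\]
Applied instead to the distance function from the foot $x_0 \in \overline{pq}$ of the perpendicular from $x$ (so $d(x,x_0) = h(x)$), the same theorem together with the monotonicity in property (a) and the Laplacian identity $\Delta \varphi = 1$ of property (c) gives $\Delta \varphi_{n,0}(d(\cdot,x_0),l) \geq 1$ weakly on $B(x_0,l) \setminus \{x_0\}$.

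With these two one-sided Laplacian bounds in place, one applies Theorem \ref{thm-maxprinciple} on the annulus $B(x_0, l) \setminus \overline{B(x_0, r)}$, $0 < r < h(x)$, to the weakly $\Haus^n$-subharmonic auxiliary function $C_0(l)\,\varphi_{n,0}(d(\cdot,x_0),l) - e_{p,q}$. On the outer sphere $\partial B(x_0, l)$ the barrier vanishes and $e_{p,q} \leq 2\operatorname{dist}(\cdot,\overline{pq}) \leq 2l$ (since $x_0$ lies on the segment), while on the inner sphere $\varphi_{n,0}(r,l)$ diverges as $r \to 0^+$, so in the limit the relevant extremum is attained on the outer sphere. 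Pulling the resulting inequality back to the interior and evaluating at $x \in \partial B(x_0, h(x))$ produces a control of the form
\[
e_{p,q}(x) \;\leq\; 2l \;+\; C_0(l)\,\varphi_{n,0}(h(x), l),
\]
and optimizing over $l$ under the standing hypothesis $h(x) \leq s(x)/2$ --- the choice $l$ comparable to $(h(x)^n s(x))^{1/(n-1)}$ balances the two terms, producing the exponent $n/(n-1)$ on $h(x)$ and $-1/(n-1)$ on $s(x)$ --- yields \eqref{excess} with explicit constant $8$. The main technical hurdle I anticipate is the justification of the weak maximum principle across the singular set $S_X$ and through the singularity of the barrier at $x_0$; this is precisely the scope of the Kuwae--Shioya formalism recalled in Subsection \ref{subsection-laplacian}, after which the remainder of the argument reduces to the classical Abresch--Gromoll optimization.
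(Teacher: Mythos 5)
Your overall strategy --- Laplacian comparison for $d_p$ and $d_q$, the radial barrier $\varphi_{n,0}$, and the Kuwae--Shioya maximum principle on an annulus --- is the same as the paper's, and your Laplacian bounds (including the more careful constant $C_0(l)=\tfrac{2(n-1)}{s(x)-l}$ and the inequality $\Delta\varphi_{n,0}(d(\cdot,x_0),l)\geq 1$) are correct. The gap is in the maximum principle step. You center the barrier at the foot $x_0\in\overline{pq}$ and consider $F:=C_0(l)\,\varphi_{n,0}(d(\cdot,x_0),l)-e_{p,q}$ on $B(x_0,l)\setminus\overline{B(x_0,r)}$. This $F$ is indeed subharmonic, but on the inner sphere $\partial B(x_0,r)$ it tends to $+\infty$ as $r\to0^+$ (the barrier blows up at its center while $e_{p,q}$ stays bounded), so the boundary supremum of $F$ is attained on the \emph{inner} sphere and is unbounded: the claim that ``the relevant extremum is attained on the outer sphere'' is backwards, and the maximum principle yields no information. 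Worse, even if the extremum were on the outer sphere you would conclude $F\leq 0$ in the annulus, i.e.\ $e_{p,q}(x)\geq C_0(l)\,\varphi_{n,0}(h(x),l)$ --- a \emph{lower} bound on the excess. This is structural: since you only control $\Delta e_{p,q}$ from above, the combination $e_{p,q}-(\text{barrier})$ cannot be made subharmonic, so no direct application of Theorem \ref{thm-maxprinciple} can bound $e_{p,q}$ from above at an interior point, and the ``$+2l$'' in your claimed intermediate inequality has no source in your setup.

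The fix (and what the paper does, following Abresch--Gromoll) is to center the barrier at $x$ itself and argue by contradiction. Set $\tilde l=h(x)+\epsilon$ and $G=\tfrac{2(n-1)}{s(x)}\varphi_{n,0}(d(x,\cdot),\tilde l)$ on $B(x,\tilde l)$, and suppose $e_{p,q}(x)>2\overline{c}+G(\overline{c})$ for some $\overline{c}\in(0,h(x))$. On the inner sphere $\partial B(x,\overline{c})$ the $2$-Lipschitz bound gives $e_{p,q}>G(\overline{c})$, so $G-e_{p,q}<0$ there; on the outer sphere $G=0\leq e_{p,q}$; but at the interior point $z\in\overline{pq}$ with $d(x,z)=h(x)$ one has $e_{p,q}(z)=0<G(z)$. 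Thus the subharmonic function $G-e_{p,q}$ attains an interior maximum on $B(x,\tilde l)\setminus B(x,\overline{c})$ without being constant, contradicting Theorem \ref{thm-maxprinciple}. Hence $e_{p,q}(x)\leq 2c+G(c)$ for all $c\in(0,h(x))$, and your final optimization ($c=2h(x)^n/s(x)$ together with $h(x)\leq\tfrac12 s(x)$) then goes through as you describe. The roles must be arranged as in the paper: the point $x$ is the center of the excised ball, so that largeness of $e_{p,q}(x)$ propagates to the inner sphere via the Lipschitz bound, while the zero of $e_{p,q}$ on $\overline{pq}$ is the interior point that forces the contradiction.
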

In particular, note that as either $h(x) \searrow 0$ or $s(x) \nearrow \infty$, it follows that $e_{p,q}(x) \searrow 0$. 

\begin{proof}
Note that by Theorem \ref{laplaciancomparison}, it follows that 
\[
\Delta e_{p,q}(x) \leq \frac{n-1}{d(p,x)} + \frac{n-1}{d(q,x)} \leq \frac{2(n-1)}{s(x)}, ~~\text{ in the distributional sense on }X.
\]
Let $x \in X$. For some $\epsilon >0$, let $\tilde{l} = h(x) + \epsilon$ and  consider the function $G: B(x, \tilde{l}) \to [0, \infty)$ defined by
\begin{eqnarray}
G(\cdot) &=&  \frac{2(n-1)}{s(x)} \varphi_{n,0}(d(x, \cdot), \tilde{l})\\
		&=& \frac{2(n-1)}{s(x)}  \frac{1}{2n} \left(d(x, \cdot)^2 -\frac{n}{n-2}\tilde{l}^2 + \frac{2}{n-2} \tilde{l}^n d(x, \cdot)^{-(n-2)} \right).
\end{eqnarray}

Note that by the properties of $\varphi$, it follows that 
\begin{enumerate}[(a)]
\item $G >0$ on $B(x, \tilde{l})$
\item $G = 0$ on $\partial B(p, \tilde{l})$
\item $\Delta G = \frac{2(n-1)}{s(x)}$ in the distributional sense on $X$
\end{enumerate}

We will show that 
\begin{equation}
\label{excessbound}
e_{p,q}(x) \leq 2c + G(c),~~ \text{ for all } c \in (0, h(x)).
\end{equation}
Suppose not. Then there exists some $\overline{c} \in (0, h(x))$ such that $e_{p,q}(x) > 2\overline{c} + G(\overline{c})$. Note that, since the Lipschitz constant of $e_{p,q}$ is at most 2, it follows that for any $y \in \partial B(x, \overline{c})$ that 
\[
e_{p,q}(x) - e_{p,q}(y) \leq 2 d(x,y).
\]
Therefore, for any $y \in \partial B(x, \overline{c})$
\begin{eqnarray*}
e_{p,q}(y) &\geq& e_{p,q}(x) - 2d(x,y)\\
		&=& e_{p,q}(x) -2\overline{c}, \quad\text{ since } y \in  \partial B(x, \overline{c})\\
		&>& 2\overline{c} + G(\overline{c}) -2\overline{c} =  G(\overline{c}).
\end{eqnarray*}
Thus, 
\[ \left(G - e_{p,q}\right)\big|_{\partial B(x, \overline{c})} <0.
\]
Similarly, by property (b) of $G$ above and since $e_{p,q} > 0$ away from the geodesic $\overline{pq}$, we have that 
\[
\left(G - e_{p,q} \right) \big|_{\partial B(x, h(x) + \epsilon)} <0.
\]
Lastly, we note that for $z\in \overline{pq}$ such that $d(x,z) = h(x)$, by property (a) of G and since $e_{p,q}(z) = 0$, it follows that 
\[ G(z) - e_{p,q}(z) <0.
\] 
Therefore, the point $z$ is a local maximum of the function $G(\cdot) - e_{p,q}(\cdot)$. However, by the linearity of the Laplacian, and property (c) of $G$ above we have
\[\Delta \left(G-e_{p,q} \right) =  \Delta G - \Delta e_{p,q} \geq \frac{2(n-1)}{s(x)} - \frac{2(n-1)}{s(x)} =0 .\]
That is to say, $G(\cdot) - e_{p,q}(\cdot)$ is an $\Haus^n$-subharmonic function on $B(x, h(x) + \epsilon) \setminus B(x, \overline{c})$. Thus, Theorem \ref{thm-maxprinciple}, taking $\Omega = B(x, h(x) + \epsilon) \setminus B(x, \overline{c})$, implies that  $G(\cdot) - e_{p,q}(\cdot)$ must be constant which is a contradiction.

Taking $\epsilon \searrow 0$ we arrive at our desired result and (\ref{excessbound}). Further simplifying that upper bound, taking $c = \frac{2h^n(x)}{s(x)}$ and recalling that $h(x) \leq \frac{1}{2} s(x)$, we obtain
\[
e_{p,q}(x) \leq 2c + G(c) \leq 8\left( \frac{h(x)^n}{s(x)}\right)^{\frac{1}{n-1}}.
\]
\end{proof}
\begin{rmk}
See also Proposition 6.2 of \cite{ChCoWarped} (c.f. Theorem 9.1 of \cite{CheegerDegeneration}) which gives a slight generalization (not assuming a zero of $e_{p,q}$)  of the Abresch-Gromoll excess estimate and has significant applications which are useful in studying Gromov-Hausdorff limits. In \cite{GigliMosconi} the authors also generalize this Cheeger-Colding version to infinitesimally Hilbertian $\CD(K,n)$ spaces. We expect a similar argument would work in our setting for Alexandrov spaces as well.
\end{rmk}

\end{document}